\documentclass[12pt]{article}
\usepackage{geometry} 
\usepackage{amsmath,amsthm,amssymb}                
\usepackage{graphicx, color}
\usepackage{amssymb}
\usepackage{epstopdf}
\usepackage{pdfsync}
\usepackage{mathabx}

\newtheorem{definition}{Definition}[section]
\newtheorem{lemma}[definition]{Lemma}
\newtheorem{theorem}[definition]{Theorem}
\newtheorem{proposition}[definition]{Proposition}

\newtheorem{remark}[definition]{Remark}

\RequirePackage{authblk}

\newcommand{\R}{\mathbb{R}}
\newcommand{\N}{\mathbb{N}}

\newcommand{\BV}{\mathrm{BV}}

\numberwithin{equation}{section}

\def\e{\varepsilon}

\def\R{\mathbb R}

\def\N{\mathbb N}

\begin{document}

\author{Andrea Braides\footnote{Department of Mathematics, University of Rome Tor Vergata, via della Ricerca Scientifica 1, 00133 Rome, Italy} \ and Abdilaziz Zoir Ugli Komilov\footnote{SISSA, via Bonomea 265, 34146 Trieste, Italy}}
\title{Homogenization in fractional phase transitions 
\\ at the critical scale}
\date{}
\def\epsilon{\varepsilon}
\maketitle

\begin{abstract}
We analyze fractional phase-transition energies with periodic hetero\-genei\-ties at the critical $H^{1/2}$
 scaling. We prove that the $\Gamma$-limit is a sharp-interface functional whose surface energy density combines homogenization and averaging effects. The resulting coefficient is a weighted combination of the minimum and the mean of the oscillatory parameter, reflecting the coexistence of multiple interaction scales. This behavior is specific to the critical regime and differs from the case $s>1/2$.

 {\bf Keywords:} $\Gamma$-convergence, homogenization, fractional phase transitions, nonlocal energies, sharp-interface limits, critical scaling.
 
 {\bf MSC 2020:} 49J45, 35B27, 35R11, 49M25, 26A33.
 
 \end{abstract}
\section{Introduction}
Fractional phase-transition energies display qualitatively different asymptotic regimes, depending on the order of the fractional seminorm. In the critical case $s=1/2$, the energy scaling becomes logarithmic and leads to sharp-interface limits independent of the potential.
The aim of this paper is to understand how the critical regime interacts with oscillatory heterogeneities and whether homogenization effects survive at this scale or are replaced by different effective behaviors.
We show the coexistence of homogenized and averaged contributions in the sharp-interface limit, which is a novel phenomenon for heterogeneous energies.

\smallskip
The study of fractional phase transitions at the critical scaling in the framework of $\Gamma$-convergence dates back to the work of Alberti et al.~\cite{ABS} (see also \cite{AB}), who examined the asymptotic behaviour of energies
\begin{equation}
    \frac{1}{\varepsilon|\log{\varepsilon}|}\int_{0}^{1}W(u(t))dt+\frac{1}{|\log\varepsilon|}\int_{0}^{1}\int_{0}^{1}\frac{\left|u(s)-u(t)\right|^2}{\left|s-t\right|^2}ds\,dt.
\end{equation}
The first term contains a double-well potential $W$, which we can assume to vanish at $-1$ and $1$, while the second term is the square of the $H^{1/2}$-seminorm. Alberti et al.~showed that the $\Gamma$-limit of such energies is
a sharp-transition energy \begin{equation}\label{shat} 
8 \#(S(u)),
\end{equation} 
with domain $BV((0,1);\{-1,1\})$; that is, defined on piecewise-constant functions taking the values in the wells, where $S(u)$ denotes the set of jump points of $u$. Note that the value of the limit is independent of $W$. This behavior should be contrasted with the case of $H^s$ fractional phase transitions with $s>1/2$. In that regime, the correct scaling is $1/\varepsilon$ for all $s$,
and the sharp-transition limit depends on the potential $W$ through an optimal-profile formula (see \cite{SV,Solci2,PaVi}). In this sense, the behaviour at $s=1/2$ is critical, showing a transition from a $1/\varepsilon$ scaling to a $1/\varepsilon|\log\varepsilon|$
scaling. This transition has been recently analyzed by Picerni \cite{Pic} in the framework of the theory of $\Gamma$-expansions \cite{BT}, studying all possible $s$-$\varepsilon$ regimes when $s\to1/2$. For a general discussion on nonlocal phase transitions, see also \cite{DipVal}.

The singular behaviour is characteristic of scale-invariant energies such as the $H^{1/2}$-seminorm. Similar behaviours can be noticed for capacitary potentials or for vortex energies \cite{BBH,ABO}.
In those frameworks, an interesting issue is the interaction with oscillations in the perturbation term. In our context, the prototypical energies to study are of the form 
\begin{equation}
F_\varepsilon(u)=    \frac{1}{\varepsilon|\log{\varepsilon}|}\int_{0}^{1}W(u(t))dt+\frac{1}{|\log\varepsilon|}\int_{0}^{1}\int_{0}^{1}a\Big(\frac{s}\delta,\frac{t}\delta
    \Big)\frac{\left|u(s)-u(t)\right|^2}{\left|s-t\right|^2}ds\,dt,
\end{equation}
with $a$ a continuous and strictly positive coefficient, $1$-periodic in both variables. This work provides a first extension of heterogeneous phase-transition results to the critical fractional regime, (see \cite{Ansini_Braides_Chiado_2003,MR4633768,MR4898688,MR4014392} for analogous local problems).

Our main result describes the sharp-interface limit of $F_\e$.
To that end, we assume that $\delta=\delta_\e$ is such that $\delta_\e\to 0$ as $\e\to 0$, and that there exist
$$
\lambda_0=\lim_{\e\to 0}\frac{\log\delta}{\log\epsilon}
$$
(which we may assume exists up to subsequences). We then set $\lambda=\min\{\lambda_0,1\}$.
Under these assumptions, the $\Gamma$-limit of $F_\e$
is given by 
\begin{equation}\label{sht}
    8\big((1-\lambda)a_{\rm min}+\lambda \overline a\big) \#(S(u)),
\end{equation}
where 
\begin{equation}
    a_{\rm min}=\min_t a(t,t),\qquad
    \overline a=\int_0^1\int_0^1 a(t,s)\,dtds.
\end{equation}
Formula \eqref{sht} reflects the coexistence of two different interaction regimes contributing to each jump point. Interactions occurring at scales much smaller than $\delta$ effectively sample local values of $a$, and therefore select the minimal value $a_{\rm min}$ of $a$ on the diagonal. Interactions occurring at scales much larger than $\delta$, instead, average out the oscillations and contribute through the mean value $\overline{a}$. The weights $1-\lambda$ and $\lambda$ correspond to the relative contributions of these two scale ranges. This coexistence of homogenized and averaged contributions is specific to the critical logarithmic scaling.

We note that \eqref{sht} can be interpreted as the homogenized energy of the one-dimensional free-discontinuity functional
\begin{equation}\label{fsh}
F^\delta(u)=
8\sum_{t\in S(u)} 
\Big((1-\lambda) a
\Big(\frac{t}\delta,\frac{t}\delta\Big)+\lambda \overline a\Big)
\qquad u\in BV((0,1);\{-1,1\}),
\end{equation}
so that the form of functional \eqref{sht} highlights the coexistence of homogenization and averaging effects.
This coexistence of different types of interaction for optimal sequences also appears in other variational problems, but with some differences highlighted in the resulting effective coefficients. For vortex problems in a heterogeneous medium, for example, the oscillation leads to homogenized coefficients, and not to averaged ones \cite{MR4367908}, while for capacitary problems, the final coefficient is obtained as the harmonic mean, and not the arithmetic mean, of the minimum value of the coefficient and of the (determinant of the) homogenized matrix \cite{MR4668552}.

Our result can be compared with the study of the interaction of homogenization and fractional phase transitions in the supercritical case \cite{CarScaVog}. In that case, the relevant scale of the interactions is of order $\varepsilon$, giving a critical behaviour when $\varepsilon$ is also the scale of the oscillations. In the two extreme regimes of the supercritical case, the limiting sharp-interface energy depends exclusively on $a_{\rm min}$ in one regime and exclusively on $\overline a$ in the other.

In higher dimension, one formally expects the term involving $a_{\rm min}$
 in \eqref{sht} to be replaced by an anisotropic perimeter arising from the homogenization of a free-discontinuity functional analogous to \eqref{fsh}; see \cite{AmBra,BMS}. Such a characterization would likely require a heterogeneous version of the fractional phase-transition result in \cite{SV}, followed by letting $\e\to 0$ with $\delta$ fixed.

\section{Statement of the main result}
Let $I$ be an open interval. We recall that a function $u$ belongs to the {\em fractional Sobolev space} $H^{1/2}(I)$ if $u \in L^{2}(I)$ and
\[
[u]_{H^{1/2}(I)}^{2} := \int_{I}\int_{I}
\frac{|u(s)-u(t)|^2}{|s-t|^2} ds\,dt < \infty.
\]
The quantity $[u]_{H^{1/2}(I)}$ is called the {\em$H^{1/2}$-Gagliardo seminorm} of $u$. We refer to \cite{leofrac} and \cite{Hitchhiker} for an introduction to fractional Sobolev spaces $H^s$.

\smallskip
We will study a sequence of non-convex integral functionals with $H^{1/2}$-per\-tur\-ba\-tions. 
In the following,  $W:\R\to[0,+\infty)$ will be a continuous function that satisfies the following assumptions:
\begin{itemize}
    \item [(A1)] ($2$-growth condition) there exist $c_1,c_2>0$ such that
    $W(x)\geq c_1\left|x\right|^2-c_2$ for all $x\in\mathbb R$.
    \item [(A2)] $W(x)=0$ if and only if $x\in\{-1,1\}$. 
\end{itemize}
This type of function is sometimes called a {\em double-well potential} with wells in $\{-1,1\}$.

Oscillations will be modeled by a periodic coefficient $a$ that satisfies the following condition.

\begin{itemize}
    \item [(A3)] The function $a:\R^{2}\to[\alpha_{a},\beta_{a}]$ is continuous, symmetric, and $1$-periodic in both variables, with $\alpha_{a}>0$.
    \end{itemize}
    
    
    Note that the assumption that $a$ is symmetric is not restrictive, up to replacing $a$ with $a_{\rm sym}$, defined as
    $
    a_{\rm sym}(s,t):=\frac{1}{2}\big(a(s,t)+a(t,s)\big)
    $.
   Indeed, after this substitution, the values of the integrals remain unchanged.

Given $\delta=\delta(\varepsilon)$, we will characterize the $\Gamma$-limit of the functionals 
\begin{equation}\label{Functional}
F_\varepsilon(u)=    \frac{1}{\varepsilon|\log{\varepsilon}|}\int_{0}^{1}W(u(t))dt+\frac{1}{|\log\varepsilon|}\int_{0}^{1}\int_{0}^{1}a\Big(\frac{s}{\delta},\frac{t}{\delta}
    \Big)\frac{\left|u(s)-u(t)\right|^2}{\left|s-t\right|^2}dsdt,
\end{equation}
with respect to the $L^2$-convergence. Here, $\varepsilon$ is the transition scale parameter, while 
$\delta$ is the oscillation period, both tending to $0$. It is not restrictive, up to subsequences, to suppose that the limit
\[
\lim_{\varepsilon\to0}\frac{\log{\delta(\varepsilon)}}{\log{\varepsilon}}=\lambda_{0}
\]
exists. Under this assumption, the main result of the paper is the following theorem.

\begin{theorem}\label{main}
Let $W:\R\to[0,+\infty)$ be a double-well potential that satisfies assumptions {\rm (A1)} and {\rm (A2)}, and let $a:\R^2\to[\alpha_{a},\beta_{a}]$ satisfy assumption {\rm (A3)}. Then the following holds.

{\rm(i) (Compactness)} 
    every sequence such that $\sup F_{\varepsilon}(u_{\varepsilon})<\infty$ is strongly relatively compact in $L^2(0,1)$ and every limit point belongs to $\BV\left((0,1);\{-1,1\}\right)$;
    
{\rm(ii) ($\Gamma$-limit)} let $\lambda=\min\{\lambda_{0},1\}$, and let
$$
a_{\min}=\min\{a(t,t): t\in [0,1]\}, \qquad \bar{a}=\int_0^1\int_0^1 a(t,s)dtds;
$$
then the functional 
\begin{equation}\label{Gamma limit}
    F(u)= \begin{cases}
    8\left((1-\lambda)
    a_{\min}+\lambda \bar{a}\right)\#S(u) & \hbox{if } u\in BV((0,1);\{-1,1\})\\ 
    +\infty& \hbox{otherwise}
    \end{cases}
\end{equation}
is the $\Gamma$-limit of $F_\varepsilon$ 
with respect to the 
$L^2$-convergence.
\end{theorem}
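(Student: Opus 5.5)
Compactness follows directly from the homogeneous case. Since $a\ge\alpha_a$, we have $F_\e(u)\ge \min\{1,\alpha_a\}\,G_\e(u)$, where $G_\e$ is the Alberti--Bouchitt\'e--Seppecher functional with $a\equiv1$. By \cite{ABS}, a bound on $G_\e$ gives $L^2$-precompactness and limits in $BV((0,1);\{-1,1\})$. Here (A1) supplies the equi-integrability of $|u_\e|^2$ that upgrades $L^1$ to $L^2$ convergence. The real work is the $\Gamma$-liminf and $\Gamma$-limsup, which I would organise around the logarithmic decomposition of the interaction range near each jump.

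\emph{Heuristic and upper bound.} For a single jump at $t_0$, the ABS optimal profile is essentially a transition on scale $\e$. For such a profile, the $H^{1/2}$ energy near $t_0$ splits over dyadic annuli in the variable $r=|s-t|$ with $\e\lesssim r\lesssim 1$, each contributing about $8\,dr/r$ after division by $|\log\e|$. This is because $\int\!\int_{s<t_0<t,\ t-s\in(r,2r)}4/(t-s)^2\approx 4\log 2$, counted twice by symmetry, and normalisation gives the constant $8$.

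\begin{itemize}
\item For $r\ll\delta$ (that is, $\log r$ between $\log\e$ and $\log\delta$, a fraction $1-\lambda$ of $|\log\e|$), the coefficient $a(s/\delta,t/\delta)$ is evaluated near $a(t_0/\delta,t_0/\delta)$.
\item For $r\gg\delta$ (a fraction $\lambda$), the product oscillates on scale $\delta$ relative to the kernel's scale $r$. A Riemann--Lebesgue or averaging argument on each annulus then replaces it by $\bar a$.
\end{itemize}

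For the recovery sequence I would place each jump at a point $t_0^\e$ with $t_0^\e/\delta$ close to a minimiser of $a(t,t)$ mod $1$; this is possible since $\delta\to0$. I would use the ABS profile at scale $\e$ and compute the energy by splitting $|s-t|$ at $\delta^{1\pm\eta}$. The intermediate layer costs $O(\eta)$, and the potential term is $o(1)$ as in ABS. This gives the upper bound $8((1-\lambda)a_{\min}+\lambda\bar a)$ per jump. When $\lambda_0\ge1$, only the averaged regime survives. When $\lambda_0=0$, I restrict to $r\ll\delta$ and use continuity of $a$.

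\emph{Lower bound.} I localise around each jump $t_0$ of $u$ and prove that the energy in a neighbourhood is at least $8((1-\lambda)a_{\min}+\lambda\bar a)-o(1)$. The plan is to decompose the double integral by the value of $|s-t|$ into the two logarithmic ranges $(\e^{1-\eta'},\delta^{1+\eta})$ and $(\delta^{1-\eta},\eta)$.
\begin{itemize}
\item On the first range I bound $a\ge a_{\min}-\omega(\delta^{\eta})$, using that $a$ is uniformly continuous and $|s-t|\ll\delta$, so $a(s/\delta,t/\delta)\approx a(t/\delta,t/\delta)\ge a_{\min}$.
\item On the second range I need a lower bound by $\bar a$ times the homogeneous annular energy.
\end{itemize}
The key tool is an annular version of the ABS lower bound: for $u_\e$ with bounded energy converging to a jump, the truncated energy $\int\!\int_{\rho_1<|s-t|<\rho_2}|u(s)-u(t)|^2/|s-t|^2$ is at least $8\log(\rho_2/\rho_1)(1-o(1))$, for $\e^{1-\eta'}\le\rho_1\ll\rho_2$. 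ABS essentially establish this by showing that $u_\e$ is close to $\pm1$ outside a set of small measure near the jump and using scale-by-scale estimates. I would reprove it dyadically: on each dyadic scale $r$, blow up at rate $r$ and use the fact that $u_\e$ is near a sharp step at that resolution except on a negligible set of scales.

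\emph{Main obstacle.} The hardest step is the averaged lower bound on the range $r\gg\delta$, i.e.\ showing that oscillations cannot be exploited by $u_\e$ at those scales. At a fixed dyadic scale $r\gg\delta$, the function $u_\e$ is close, in the relevant rescaled $L^2$ sense, to a step at resolution $r$. The weight $|u(s)-u(t)|^2/|s-t|^2$ is then a product of a slowly varying function (at scale $r$) and $a(s/\delta,t/\delta)$, oscillating at scale $\delta\ll r$. Weak-$*$ convergence of $a(\cdot/\delta,\cdot/\delta)$ to $\bar a$ then gives the limit $\bar a$ times the homogeneous contribution.

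Making this uniform over $\sim\lambda|\log\e|$ scales is delicate. I would try a discretisation into $K$ blocks of scales: average over each block, apply a rescaled compactness (blow-up converging to a step) at the block's characteristic scale, and use convergence of $a$ to its mean on the blocks. Scales where $u_\e$ fails to be near a step are controlled by the total energy bound, which implies they form a vanishing fraction of $\log$-scales.

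Finally, summing over the finitely many jumps, whose neighbourhoods are disjoint for small $\eta$, gives the liminf inequality.
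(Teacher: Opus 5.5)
Your compactness argument (comparison with the ABS functional via $a\ge\alpha_a$) is sound, and so is your recovery sequence: the jump is placed at a point of $\delta(\bar t+\mathbb Z)$ with $a(\bar t,\bar t)=a_{\min}$, the transition is at scale $\varepsilon$, and the interaction is split at scale $\delta$. Both are essentially what the paper does.

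The gap is in the liminf inequality, at the step you call the main obstacle. The lower bound by $\bar a$ on the range $|s-t|\gg\delta$ is not proved. Weak-$*$ convergence of $a(\cdot/\delta,\cdot/\delta)$ to $\bar a$ gives the right constant along one sequence of scales $r_\varepsilon\gg\delta$ at which a blow-up of $u_\varepsilon$ converges strongly to a single step. You need this for all but $o(|\log\varepsilon|)$ of the dyadic scales, and the claim that the energy bound makes the remaining scales negligible is only asserted.

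For the natural choice of centre, the claim is false. Assume $\lambda>0$ and let $u_\varepsilon$ equal $-1$ on the left, with $\varepsilon$-scale linear transitions that switch it to $+1$ at $t_0-\varepsilon^{\lambda/4}$, back to $-1$ at $t_0-\varepsilon^{3\lambda/4}$, and to $+1$ at $t_0$. Then $\sup_\varepsilon F_\varepsilon(u_\varepsilon)<\infty$ and $u_\varepsilon$ converges to the step at $t_0$. However, for $\varepsilon^{3\lambda/4}\ll r\ll\varepsilon^{\lambda/4}$, half of the logarithmic range where you need the averaged bound, the blow-up of $u_\varepsilon$ at $t_0$ at resolution $r$ is close to the constant $+1$. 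The energy at those scales is carried by the transition at $t_0-\varepsilon^{\lambda/4}$.

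So centres must be chosen according to the multiscale structure of $u_\varepsilon$. You would then have to show that, for all but a negligible set of scales, some centre gives a blow-up close to an isolated step, for functions that are only close to $\pm1$ off a small set. None of this is in the proposal. The same unproved claim underlies the dyadic proof you suggest for your homogeneous annular lemma, which ABS do not state in that form. For the range $|s-t|\ll\delta$ you do not need that lemma: as the paper does, you can apply the ABS interval estimate to a single transition cell of length $\theta'(\varepsilon)\ll\delta$, on which $a\ge a_{\min}-o(1)$.

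The idea that avoids any per-scale compactness is to coarse-grain once, at a scale $\theta''(\varepsilon)$ that is an integer multiple of $\delta$ with $\log\theta''(\varepsilon)/\log\varepsilon\to\theta''<\lambda$.

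\begin{itemize}
\item By the energy bound, all but boundedly many cells are of one of two types. A cell $I_k$ of type $P_1$ has $u_\varepsilon\le\tau-1$ on a set $A_k$ of relative measure at least $1-2\tau$. A cell $I_m$ of type $P_2$ has $u_\varepsilon\ge1-\tau$ on a set $B_m$ of relative measure at least $1-2\tau$.
\item For $k\in P_1$ and $m\in P_2$ one has $|u_\varepsilon(s)-u_\varepsilon(t)|^2\ge4(1-\tau)^2$ on $A_k\times B_m$, and $|s-t|\le(|k-m|+1)\theta''(\varepsilon)$.
\item Every cell is a union of whole periods, so $\int_{A_k}\int_{B_m}a(s/\delta,t/\delta)\,ds\,dt\ge(\bar a-6\tau\beta_a)\theta''(\varepsilon)^2$ however $u_\varepsilon$ correlates with $a$. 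This holds because the exceptional sets are a $2\tau$-fraction of each cell and $a\le\beta_a$.
\end{itemize}

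This reduces all interactions at distances $\gtrsim\theta''(\varepsilon)$ to the discrete sum $\sum_{k\in P_1,\,m\in P_2}(|k-m|+1)^{-2}$. The paper's rearrangement lemma moves $P_1$ to the left end and $P_2$ to the right end, and bounds this sum below by $\log n''(\varepsilon)-C\sim\theta''|\log\varepsilon|$, since both index sets have cardinality comparable to $n''(\varepsilon)\sim\rho/\theta''(\varepsilon)$. This handles configurations like the one above automatically and yields $8\theta''(1-\tau)^2(\bar a-6\tau\beta_a)$. The scales between $\theta'(\varepsilon)$ and $\theta''(\varepsilon)$ are simply discarded, at a cost $\theta'-\theta''\to0$.
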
 

\begin{remark}[heterogeneities in the potential part]\rm
It is worth noting the asymmetry of the effect of heterogeneities when considered in the potential part as in \cite{MR4633768,MR4898688,MR4014392}. Indeed, if we take energies
$$
F_\varepsilon(u)=    \frac{1}{\varepsilon|\log{\varepsilon}|}\int_{0}^{1}a\Big(\frac t\delta\Big)W(u)dt+\frac{1}{|\log\varepsilon|}\int_{0}^{1}\int_{0}^{1}\frac{\left|u(s)-u(t)\right|^2}{\left|s-t\right|^2}ds\,dt,
$$
with $a$ periodic and any $\delta=\delta(\varepsilon)$, then the limit is always equal to the functional in \eqref{shat}, by the observed independence on the potential part. 
\end{remark}


\section{Compactness}
The compactness result in Theorem \ref{main}(i) has already been proved in \cite{ABS}.
In this section, we present a proof that will also be used in the computation of the $\Gamma$-limit.
The argument is based on a discretization procedure and on the following lemma, whose proof can also be found in \cite{ABS}. This lemma is used to estimate the contribution of non-local phase transitions on small intervals.

\begin{lemma}\label{lower bound lemma}
Let $\tau\in(0,\frac{1}{8})$, $u_\varepsilon \in L^2(0,1)$, and $I$ be an interval contained in $(0,1)$.  
For $\varepsilon>0$, $A(\varepsilon)$ and $B(\varepsilon)$ the sets defined as
\begin{equation*}
     A({\varepsilon}):=\big\{t\in (0,1):\,u_{\varepsilon}(t)\le\tau-1\big\},\,\,\, B({\varepsilon}):=\big\{t\in (0,1):\,u_{\varepsilon}(t)\ge1-\tau\big\},
 \end{equation*}
and set
$\displaystyle a(\varepsilon) := \frac{|A({\varepsilon}) \cap I|}{|I|}$,
and
$\displaystyle b(\varepsilon) := \frac{|B({\varepsilon}) \cap I|}{|I|}$.
Then, we have:
\begin{eqnarray}
\label{lover bound ineq}\nonumber
&&\hskip-1.6cm\bar{F}_{\varepsilon}(u,I):=
\frac{1}{\varepsilon\left|\log\varepsilon\right|}\int_{I} W(u)\,dt+\frac{1}{\left|\log\varepsilon\right|}
\int_{I}\int_{I}
\frac{\left|u(s)-u(t)\right|^2}{\left|s-t\right|^2}
dsdt\\
&&\ge
\frac{8(1-\tau)^2}{\left|\log\varepsilon\right|}
\Big(
\log(a(\varepsilon) b(\varepsilon))
+|\log\varepsilon|+\log{|I|}+\log\Big(\frac{\sigma(\tau)}{8(1-\tau)^2}\Big)
\Big),
\end{eqnarray}
where $\sigma(\tau):=\min\{W(x): ||x|-1|\ge\tau\}$. Note that
$\sigma(\tau)>0$ by Hypothesis {\rm (A2)}.
\end{lemma}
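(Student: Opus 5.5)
We split $I$ according to the values of $u$ and use only the interactions between the set where $u$ is near $-1$ and the set where $u$ is near $+1$, together with the potential term on the remaining set. Write $I=(x_0,x_0+\ell)$ with $\ell=|I|$, let $A=A(\varepsilon)\cap I$ and $B=B(\varepsilon)\cap I$, and let $C=I\setminus(A\cup B)$. On $C$ we have $||u|-1|\ge\tau$ (up to the harmless case of points with $|u|<1-\tau$, which are also in the complement), so $W(u)\ge\sigma(\tau)$ there. Hence
\[
\frac{1}{\varepsilon|\log\varepsilon|}\int_I W(u)\,dt\ \ge\ \frac{\sigma(\tau)|C|}{\varepsilon|\log\varepsilon|}.
\]
For $s\in A$, $t\in B$ we have $|u(s)-u(t)|\ge 2(1-\tau)$. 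Counting both orderings of the pair in the double integral, we get
\[
\frac{1}{|\log\varepsilon|}\int_I\int_I\frac{|u(s)-u(t)|^2}{|s-t|^2}\ \ge\ \frac{8(1-\tau)^2}{|\log\varepsilon|}\int_A\int_B\frac{ds\,dt}{|s-t|^2}.
\]

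The key step is a lower bound on $\int_A\int_B|s-t|^{-2}$ that depends only on $|A|=a\ell$, $|B|=b\ell$ and the gap set $C$. I would use a rearrangement argument. Among sets of given measure inside an interval, the interaction $\int_A\int_B|s-t|^{-2}$ is smallest when $A$ and $B$ are pushed apart: $A$ at one end, $B$ at the other, with $C$ a single interval of length $c=|C|$ in between. To justify this I would slide the pieces of $A$ to the left and the pieces of $B$ to the right. This can be done by a one-dimensional monotone rearrangement which never decreases any distance $|s-t|$ with $s\in A$, $t\in B$; alternatively, one can compare directly through the layer-cake formula for the kernel. I expect this to be the main technical point.

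For the model configuration, a direct computation gives
\[
\int_0^{a\ell}\int_{a\ell+c}^{a\ell+c+b\ell}\frac{dt\,ds}{(t-s)^2}=\log\frac{(a\ell+c)(b\ell+c)}{c\,(a\ell+b\ell+c)}\ \ge\ \log\frac{ab\,\ell^2}{c\,\ell}=\log\frac{ab\,\ell}{c},
\]
using $a\ell+c\ge a\ell$, $b\ell+c\ge b\ell$ and $a\ell+b\ell+c=\ell$. If $c=0$ the interaction is infinite, since $u$ jumps by $2(1-\tau)$ across a point; this case is therefore trivial.

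The total energy is then at least
\[
\frac{8(1-\tau)^2}{|\log\varepsilon|}\Big(\log(ab\ell)-\log c\Big)+\frac{\sigma(\tau)c}{\varepsilon|\log\varepsilon|}.
\]
We minimize in $c>0$ the quantity $-K\log c+\sigma c/\varepsilon$, where $K=8(1-\tau)^2$. The minimum is attained at $c=K\varepsilon/\sigma$, and its value is $-K\log(K\varepsilon/\sigma)+K\ge K\big(|\log\varepsilon|+\log(\sigma/K)\big)$. Substituting yields exactly the stated inequality
\[
\frac{8(1-\tau)^2}{|\log\varepsilon|}\Big(\log(a(\varepsilon)b(\varepsilon))+|\log\varepsilon|+\log|I|+\log\frac{\sigma(\tau)}{8(1-\tau)^2}\Big).
\]
If $a$ or $b$ vanishes, the right-hand side is $-\infty$ and there is nothing to prove.
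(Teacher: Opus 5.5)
Your overall scheme is correct and is the standard argument of \cite{ABS}, to which the paper refers instead of giving a proof. You bound the potential term below by $\sigma(\tau)|C|/(\varepsilon|\log\varepsilon|)$ and the interaction below by $8(1-\tau)^2\int_A\int_B|s-t|^{-2}$. You then compare with $A$ and $B$ placed at opposite ends of $I$ and optimize in $c=|C|$. Your computations check out: the model integral is $\log\frac{(a\ell+c)(b\ell+c)}{c\ell}$, and minimizing $-K\log c+\sigma(\tau)c/\varepsilon$ gives exactly the stated constant.

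The gap is the step you yourself single out, and the justification you offer for it is false. Sliding $A$ to the left and $B$ to the right by order-preserving maps does decrease some distances $|s-t|$. For instance, take $B=(x_0,x_0+b\ell)$ and $A=(x_0+\ell-a\ell,x_0+\ell)$. Points of $B$ near $x_0$ are sent near $x_0+a\ell+c$, and points of $A$ near $x_0+\ell$ are sent near $x_0+a\ell$. So pairs at distance almost $\ell$ end up at distance almost $c$. Thus the inequality
\[
\int_A\int_B\phi(|s-t|)\,ds\,dt\ \ge\ \int_{x_0}^{x_0+a\ell}\int_{x_0+\ell-b\ell}^{x_0+\ell}\phi(t-s)\,dt\,ds,\qquad \phi(r)=r^{-2},
\]
is a genuine rearrangement inequality that holds only after integration, not pair by pair. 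It carries the whole content of the lemma. The layer-cake remark does not prove it; it only reduces it to the same inequality for the kernels $\mathbf{1}_{[0,R)}(|s-t|)$.

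You can close the gap with a continuous version of the peeling induction used in the paper's Lemma \ref{rearrangement lemma}, which is valid for every nonincreasing $\phi\ge 0$.

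First let $A$ and $B$ be finite unions of disjoint intervals, and induct on the total number of intervals. Let $J$ be the leftmost one. If $J\subset B$, exchange the roles of $A$ and $B$; this leaves both sides unchanged (for the right-hand side, use the reflection $x\mapsto 2x_0+\ell-x$). Translating $J$ onto $(x_0,x_0+|J|)$ can only increase its distances to $B$, which lies to its right. For every translated point $s'$, the bathtub principle then gives
\[
\int_B\phi(t-s')\,dt\ \ge\ \int_{x_0+\ell-b\ell}^{x_0+\ell}\phi(t-s')\,dt,
\]
because this interval is the part of $(s',x_0+\ell)$ of measure $|B|$ farthest from $s'$. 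Since $A\setminus J$ and $B$ lie in $(x_0+|J|,x_0+\ell)$, the induction hypothesis on that interval bounds $\int_{A\setminus J}\int_B\phi$ from below by the model integral over $(x_0+|J|,x_0+a\ell)\times(x_0+\ell-b\ell,x_0+\ell)$. The two bounds add up to the right-hand side.

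For general measurable $A$, $B$, approximate them in measure by disjoint finite unions of intervals. Apply the result to the bounded kernels $\min\{\phi,M\}$, for which both sides pass to the limit, and let $M\to\infty$ by monotone convergence.

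This also handles $c=0$, where the model integral diverges; your reason there (a jump across a single point) is not valid for general measurable sets. With this step supplied, the rest of your argument goes through as written.
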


\subsection{A discretization procedure}\label{discr}
In this section, we introduce the relevant notation for a discretization argument that is used in the proof of the compactness result and will also be used in the computation of the $\Gamma$-limit.

Let $\theta\in[0,1)$, and let $\{\theta(\varepsilon)\}_{\varepsilon}>0$ be a sequence that converges to $0$ as $\varepsilon\to0$ such that 
\begin{equation}
\label{theta-eps}
\lim_{\varepsilon\to0}\frac{\log{\theta(\varepsilon)}}{\log{\varepsilon}}=\theta.
\end{equation}

The trivial choice for $\theta(\varepsilon)$ is $\varepsilon^\theta$, but we keep a more general notation since, in a periodic setting, we could need $\theta(\varepsilon)$ to be a multiple of $\delta$.

 Then, we consider a partition of $[0,1]$ into sets
\[
I_{k}({\varepsilon})=\begin{cases}
    [(k-1)\theta(\varepsilon),k\theta(\varepsilon)) & k\in\{1,...,n(\varepsilon)\}\\
    [n(\varepsilon)\theta(\varepsilon),1) & k=n(\varepsilon)+1,
\end{cases}
\]
where $n(\varepsilon)=\lfloor\frac{1}{\theta(\varepsilon)}\rfloor$. 

For $\tau\in(0,\frac{1}{8})$ and $k\in\{1,...,n(\varepsilon)\}$ we define the following sets:
\begin{equation*}
A_k(\varepsilon)=\big\{t\in I_{k}({\varepsilon}):\,u_{\varepsilon}(t)\le\tau-1\big\},\,\,\,B_k(\varepsilon)=\big\{t\in I_{k}({\varepsilon}):\,u_{\varepsilon}(t)\ge1-\tau\big\},
\end{equation*}
 and let $a_k(\varepsilon)$ and  $b_k(\varepsilon)$ be the Lebesgue measure of $A_k(\varepsilon)$ and $B_k(\varepsilon)$, respectively. Let $P_{1}(\varepsilon)$, $P_{2}(\varepsilon)$, $P_{3}(\varepsilon)$ and $\bar{P}_{3}(\varepsilon)$ be subsets of the set of indices $\{1,...,n(\varepsilon)\}$ defined by
    \[ 
     P_1(\varepsilon)=\left\{k\in\{1,...,n(\varepsilon)\}:a_k(\varepsilon)\ge(1-2\tau)\theta(\varepsilon)\right\},
    \]
    \[
     P_{2}(\varepsilon)=\left\{k\in\{1,...,n(\varepsilon)\}:b_k(\varepsilon)\ge(1-2\tau)\theta(\varepsilon)\right\},
    \]
    \[
     P_{3}(\varepsilon)=\left\{k\notin P_{1}(\varepsilon)\cup P_{2}(\varepsilon):\min{(a_k(\varepsilon),b_k(\varepsilon))}\ge\tau\theta(\varepsilon)\right\},
    \]
and let
\[
\bar{P}_{3}(\varepsilon)=\big\{k\in\{1,...,n(\varepsilon)\}:k\in P_{1}(\varepsilon),(k+1)\in P_{2}(\varepsilon)\,\,\text{or}\,\,k\in P_{2}(\varepsilon),k+1\in P_{1}(\varepsilon)\big\}. 
\]

We claim that $\#\{\bar{P}_{3}(\varepsilon)\cup P_{3}(\varepsilon)\}$ is uniformly bounded independently of $\varepsilon$ and $\tau$.
\begin{proposition}\label{3.2}
Let $(u_{\varepsilon})_{\varepsilon}\in H^{1/2}(0,1)$ be a sequence, and let $C>0$ be such that $\sup_{\varepsilon>0} F_{\varepsilon}(u_{\varepsilon}) \le C < \infty.$ Then the following holds.

\smallskip
{\rm(a)} For any $\tau\in(0,\frac{1}{8})$, there is $\varepsilon(\tau)>0$ such that 
\[
P_{1}(\varepsilon)\cup P_{2}(\varepsilon)\cup P_{3}(\varepsilon)=\{1,...,n(\varepsilon)\}
\]
for all $\varepsilon\in(0,\varepsilon(\tau))$;

\smallskip
{\rm(b)} $\#\{\bar{P}_{3}(\varepsilon)\cup P_{3}(\varepsilon)\}$ is uniformly bounded, independent of $\varepsilon$ and $\tau$.  
\end{proposition}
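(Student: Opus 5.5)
For part (a) I will argue by contradiction using only the potential term. Suppose $k\notin P_1(\varepsilon)\cup P_2(\varepsilon)\cup P_3(\varepsilon)$. Then $\min(a_k,b_k)<\tau\theta(\varepsilon)$.

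Take the case $a_k<\tau\theta$. Since $k\notin P_2$, we also have $b_k<(1-2\tau)\theta$. So the set of $t\in I_k(\varepsilon)$ with $|u_\varepsilon(t)|\in(1-\tau)$-range excluded, i.e. with $\tau-1<u_\varepsilon(t)<1-\tau$, has measure at least $\theta-\tau\theta-(1-2\tau)\theta=\tau\theta(\varepsilon)$. The case $b_k<\tau\theta$ is symmetric, using $k\notin P_1$.

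On that set $||u_\varepsilon|-1|\ge\tau$, so $W(u_\varepsilon)\ge\sigma(\tau)$. Since $a\ge\alpha_a>0$ and the double integral is nonnegative,
\[
C\ge F_\varepsilon(u_\varepsilon)\ge \frac{\sigma(\tau)\,\tau\,\theta(\varepsilon)}{\varepsilon|\log\varepsilon|}.
\]
By \eqref{theta-eps} with $\theta<1$, we have $\theta(\varepsilon)=\varepsilon^{\theta+o(1)}$. Hence $\theta(\varepsilon)/(\varepsilon|\log\varepsilon|)\to+\infty$, and the inequality fails for $\varepsilon<\varepsilon(\tau)$. This proves (a).

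For part (b) I will bound $\#P_3$ and $\#\bar P_3$ by adding local lower bounds from Lemma \ref{lower bound lemma} over disjoint intervals. First, since $\alpha_a\le a$, we have $F_\varepsilon(u,I)\ge\min(1,\alpha_a)\bar F_\varepsilon(u,I)$. Second, for disjoint intervals $J_i$, the sum $\sum_i \bar F_\varepsilon(u,J_i)$ is at most $\bar F_\varepsilon(u,(0,1))$, because the double integrals over $J_i\times J_i$ are disjoint pieces of the full one.

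For $k\in P_3$, I apply the lemma on $I=I_k$. There $a(\varepsilon)b(\varepsilon)\ge\tau^2$ and $|I|=\theta(\varepsilon)$. The bracket in the lemma is then at least $(1-\theta+o(1))|\log\varepsilon|-c(\tau)$. So each such interval contributes at least $8(1-\tau)^2(1-\theta)/2$ once $\varepsilon$ is small.

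For $k\in\bar P_3$, I apply the lemma on $I=I_k\cup I_{k+1}$. There $a,b\ge(1-2\tau)/2$, and the same bound holds. To keep the intervals disjoint, I take every other index in $\bar P_3$ and every other enlarged interval, losing a factor of at most $3$.

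Summing gives $\#(P_3\cup\bar P_3)\le C'/\big((1-\tau)^2(1-\theta)\min(1,\alpha_a)\big)$. Since $\tau<1/8$, this bound is independent of $\tau$ and $\varepsilon$, though it depends on $\theta$, which is fixed.

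The main obstacle is that the constant $\log(\sigma(\tau)\tau^2/\dots)$ depends on $\tau$. It is harmless only because it is divided by $|\log\varepsilon|$. So "uniformly in $\tau$" has to be read as holding for $\varepsilon<\varepsilon(\tau)$ small enough, with the bound itself free of $\tau$. I will state the threshold $\varepsilon(\tau)$ explicitly, choosing it so that $c(\tau)+o(1)|\log\varepsilon|\le\frac{1-\theta}{2}|\log\varepsilon|$.
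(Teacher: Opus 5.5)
Your proposal is correct and follows the paper's proof. Part (a) bounds the potential term from below on the set $\{\tau-1<u_\varepsilon<1-\tau\}\cap I_k(\varepsilon)$, which has measure at least $\tau\theta(\varepsilon)$; part (b) applies Lemma~\ref{lower bound lemma} on $I_k(\varepsilon)$ or on $I_k(\varepsilon)\cup I_{k+1}(\varepsilon)$, getting an energy contribution of order $(1-\theta)$ for each index. Your extra care fills in steps the paper leaves implicit: superadditivity over disjoint intervals, extracting a disjoint subfamily from the overlapping doubled intervals, the factor $\min(1,\alpha_a)$, and stating explicitly that the $\tau$-free bound holds only for $\varepsilon<\varepsilon(\tau)$.
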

\begin{proof} 
(a) Suppose $k\notin P_{1}(\varepsilon)\cup P_{2}(\varepsilon)\cup P_{3}(\varepsilon)$. Then we have 
$\min(a_{k}(\varepsilon),b_{k}(\varepsilon))<\tau\theta(\varepsilon)$. 
We may assume that 
$\min(a_{k}(\varepsilon),b_{k}(\varepsilon))=a_{k}(\varepsilon)<\tau\theta(\varepsilon)
$,
and we also know that $k\notin P_{2}(\varepsilon)$. This implies that 
\[
|(A_{k}(\varepsilon)\cup B_{k}(\varepsilon))^{c}|=\theta(\varepsilon)-a_{k}(\varepsilon)-b_{k}(\varepsilon)\ge\tau\theta(\varepsilon),
\]
where $(A_{k}(\varepsilon)\cup B_{k}(\varepsilon))^{c}=\{t\in I_{k}(\varepsilon):\,\tau-1<u_{\varepsilon}(t)<1-\tau\}$. On the other hand, we have $
\inf_{x\in(\tau-1,1-\tau)}W(x)\ge\sigma(\tau)$, so that we obtain 
\[
\frac{\sigma(\tau)\tau\theta(\varepsilon)}{\varepsilon|\log{\varepsilon}|}\le\frac{1}{\varepsilon|\log{\varepsilon}|}\int_{(A_{k}(\varepsilon)\cup B_{k}(\varepsilon))^{c}}W(u_{\varepsilon})dt\le C.
\]
Since $\theta<1$, the left-hand side tends to $+\infty$ as $\varepsilon\to0$. 

\smallskip
(b) To prove claim (b), we use Lemma \ref{lower bound lemma}.
Indeed, let  $k\in \bar{P}_{3}(\varepsilon)\cup P_{3}(\varepsilon)$. By the lemma, if $k\in P_{3}(\varepsilon)$ then we obtain the lower bound 
\begin{equation*}
    F_{\varepsilon}(u_\varepsilon,I_{k}(\varepsilon))
\ge
\frac{8(1-\tau)^2\alpha_{a}}{\left|\log\varepsilon\right|}
\Big(
2\log\tau
+
|\log\varepsilon|
+\log{\theta(\varepsilon)}+
\log
\Big(
\frac{\sigma(\tau)}
{8(1-\tau)^2\beta_{a}}
\Big)
\Big),
\end{equation*}
while, if $k\in\bar{P}_{3}(\varepsilon)$ then
\begin{multline*}
     F_{\varepsilon}(u_\varepsilon,I_{k}(\varepsilon)\cup I_{k+1}(\varepsilon))
\ge
\frac{8(1-\tau)^2\alpha_{a}}{\left|\log\varepsilon\right|}
\Big(2\log{\Big(\frac{1}{2}-\tau\Big)}
+|\log\varepsilon|
\\+\log{2\theta(\varepsilon)}+
\log\Big(\frac{\sigma(\tau)}{8(1-\tau)^2\beta_{a}}\Big)\Big).
\end{multline*}

As a result, in both cases, we obtain the following lower bound
\begin{equation*}
     \liminf_{\varepsilon\to0}F_{\varepsilon}(u_\varepsilon,\bar{I}_{k}(\varepsilon))\ge8(1-\tau)^2(1-\theta)\alpha_{a}\ge(1-\theta)\alpha_{a}>0,
\end{equation*}
where 
\[
\bar{I}_{k}(\varepsilon)=\begin{cases}
    I_{k}(\varepsilon) & \hbox{if}\,\, k\in P_{3}(\varepsilon),\\
    I_{k}(\varepsilon)\cup I_{k+1}(\varepsilon) & \hbox{if}\,\, k\in \bar{P}_{3}(\varepsilon).
\end{cases}
\]
Since $\theta<1$ and $\alpha_{a}>0$, the uniform boundedness of $\left(F_{\varepsilon}(u_\varepsilon)\right)_{\varepsilon}$ implies that $\#\{P_{3}(\varepsilon)\cup \bar{P}_{3}(\varepsilon)\}$ is uniformly bounded independent of $\varepsilon$ and $\tau$.  
\end{proof}

\subsection{Proof of the compactness result}
Before proving the compactness result, we state and prove a simple estimate showing the convergence in measure.

\begin{proposition}\label{convergence in measure}
   Let $(u_{\varepsilon})_{\varepsilon}$ be a sequence and let $C>0$ be such that $\sup_{\varepsilon>0}F_{\varepsilon}(u_{\varepsilon})\le C<\infty$. Then, for every $\tau>0$ we have
\[
\lim_{\varepsilon\to0}|\{t\in(0,1):\,\operatorname{dist}(u_{\varepsilon}(t),\{-1,1\})\ge\tau\}|=0
\]
\end{proposition}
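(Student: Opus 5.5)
The plan is to use only the potential term of $F_\varepsilon$ together with a Chebyshev-type estimate; the nonlocal term is nonnegative (because $a\ge\alpha_a>0$) and can simply be dropped. Fix $\tau>0$. Since we want to bound the measure of a superlevel set of $\operatorname{dist}(u_\varepsilon,\{-1,1\})$, it is enough to treat small $\tau$, say $\tau<\frac18$: the set for a larger $\tau$ is contained in the set for any smaller one.

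The first step is to bound $W$ from below on the set in question. Let
\[
E_\varepsilon(\tau):=\{t\in(0,1):\operatorname{dist}(u_\varepsilon(t),\{-1,1\})\ge\tau\}.
\]
On $E_\varepsilon(\tau)$ we have $||u_\varepsilon(t)|-1|\ge\tau$, so $W(u_\varepsilon(t))\ge\sigma(\tau)$, where $\sigma(\tau)=\min\{W(x):||x|-1|\ge\tau\}$ is as in Lemma \ref{lower bound lemma}. We need $\sigma(\tau)>0$. By (A1), $W(x)\ge c_1|x|^2-c_2$, so $W$ is large outside a compact set. On that compact set intersected with $\{||x|-1|\ge\tau\}$, $W$ is continuous and, by (A2), never vanishes. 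The minimum is therefore attained and positive.

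The second step is Chebyshev's inequality. It gives
\[
\sigma(\tau)\,|E_\varepsilon(\tau)|\le\int_0^1W(u_\varepsilon)\,dt\le \varepsilon|\log\varepsilon|\,F_\varepsilon(u_\varepsilon)\le C\,\varepsilon|\log\varepsilon|.
\]
Hence $|E_\varepsilon(\tau)|\le C\varepsilon|\log\varepsilon|/\sigma(\tau)\to0$ as $\varepsilon\to0$. This is the same argument as in Proposition \ref{3.2}(a), applied to the whole interval $(0,1)$ instead of to a single cell $I_k(\varepsilon)$.

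The only point that needs care is the positivity of $\sigma(\tau)$, and specifically the fact that the minimum is attained even though the set $\{||x|-1|\ge\tau\}$ is unbounded. The growth condition (A1) handles this. The rest of the argument is immediate.
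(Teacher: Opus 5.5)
Your proof is correct and follows the paper's argument: you drop the nonnegative nonlocal term and apply Chebyshev's inequality with $\sigma(\tau)$ to obtain $|E_\varepsilon(\tau)|\le C\varepsilon|\log\varepsilon|/\sigma(\tau)\to0$. You also note that positivity of $\sigma(\tau)$ on the unbounded set $\{||x|-1|\ge\tau\}$ needs the growth condition (A1) as well as (A2); this is a correct refinement, since the paper attributes $\sigma(\tau)>0$ to (A2) alone.
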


\begin{proof}
Let $\tau>0$ and let $\sigma(\tau)>0$ be defined as in Proposition 
\ref{lower bound lemma}.
Consequently,
\[
|\{t\in(0,1):\,\operatorname{dist}(u_{\varepsilon}(t),\{-1,1\})\ge\tau\}|\le\frac{1}{\sigma(\tau)}\int_{0}^{1}W(u_{\varepsilon})dt\le\frac{C\varepsilon|\log{\varepsilon}|}{\sigma(\tau)}.
\]
If we pass to the limit on both sides as $\varepsilon\to0$, we obtain the claim.
\end{proof}
We are now ready to state and prove the compactness result.

\begin{proposition}\label{compactness}
Let $(u_{\varepsilon})_{\varepsilon}$ be a sequence and let $C>0$ be such that $\sup_{\varepsilon>0}F_{\varepsilon}(u_{\varepsilon})\le C<\infty$. Then, there exists a subsequence $(u_{\varepsilon_{i}})$ such that $u_{\varepsilon_{i}}$ converges in $L^{2}(0,1)$ to some $u\in\BV((0,1);\{-1,1\})$.
\end{proposition}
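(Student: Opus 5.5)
We prove compactness by combining the uniform bound of Proposition~\ref{3.2} with the convergence in measure of Proposition~\ref{convergence in measure}. First, (A1) together with the bound $F_\varepsilon(u_\varepsilon)\le C$ gives $\int_0^1 |u_\varepsilon|^2\,dt\le (C\varepsilon|\log\varepsilon|+c_2)/c_1$. Hence $(u_\varepsilon)$ is bounded in $L^2(0,1)$.

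Fix $\theta\in(0,1)$, say $\theta(\varepsilon)=\varepsilon^{1/2}$, and $\tau\in(0,\frac18)$. Using the partition of Section~\ref{discr}, Proposition~\ref{3.2}(a) shows that for $\varepsilon<\varepsilon(\tau)$ every index lies in $P_1\cup P_2\cup P_3$. By (b), the number of indices in $P_3(\varepsilon)\cup\bar P_3(\varepsilon)$ is at most $N$, independently of $\varepsilon$ and $\tau$. We then define the piecewise-constant function $v_\varepsilon$ equal to $-1$ on $I_k$ for $k\in P_1$ and $+1$ on $I_k$ for $k\in P_2$. On the remaining intervals (those in $P_3$, and the last interval $I_{n(\varepsilon)+1}$) we set it equal to, say, $1$.

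Since each $P_3$-index and each change between a $P_1$-block and a $P_2$-block contributes at most one jump, $v_\varepsilon$ has at most $2N+2$ jumps. Hence $|Dv_\varepsilon|(0,1)$ is bounded uniformly in $\varepsilon$. By compactness in $BV$ (Helly's theorem), a subsequence converges in $L^1$, and therefore in $L^2$ since the functions take values in $\{-1,1\}$. The limit $u$ is a function in $BV((0,1);\{-1,1\})$ with at most $2N+2$ jumps.

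It remains to show $\|u_\varepsilon-v_\varepsilon\|_{L^1}\to0$, and then upgrade this to $L^2$ convergence. Fix $\eta>0$. We split $(0,1)$ into three pieces.
\begin{itemize}
\item On the exceptional intervals (those in $P_3$ and the last one), whose total measure is at most $(N+1)\theta(\varepsilon)\to0$, we use Cauchy--Schwarz with the $L^2$ bound.
\item On $P_1$ and $P_2$ intervals, the set where $u_\varepsilon$ is within $\tau$ of the opposite well has measure at most $2\tau\theta(\varepsilon)$ per interval. This gives total measure at most $2\tau$, and on it $|u_\varepsilon-v_\varepsilon|\le 2+\tau$ plus the contribution of the set where $\operatorname{dist}(u_\varepsilon,\{\pm1\})\ge\tau$.
\item The set where $\operatorname{dist}(u_\varepsilon,\{\pm1\})\ge\tau$ has vanishing measure by Proposition~\ref{convergence in measure}, and we again use Cauchy--Schwarz on it.
\end{itemize}
Elsewhere $|u_\varepsilon-v_\varepsilon|\le\tau$. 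Altogether $\limsup_\varepsilon\|u_\varepsilon-v_\varepsilon\|_{L^1}\le C'\tau^{1/2}$. We let $\tau\to0$ and take a diagonal subsequence. Since the bound on the number of jumps does not depend on $\tau$, the limits obtained for different $\tau$ coincide, so $u_{\varepsilon_i}\to u$ in $L^1$.

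To upgrade to $L^2$, note that (A1) gives equi-integrability of $|u_\varepsilon|^2$. Indeed, where $|u_\varepsilon|\ge M$ we have $W(u_\varepsilon)\ge c_1 M^2/2$ for large $M$. Hence $\int_{\{|u_\varepsilon|\ge M\}}|u_\varepsilon|^2\le \frac{2}{c_1}\int W(u_\varepsilon)\le \frac{2C}{c_1}\,\varepsilon|\log\varepsilon|\to0$. Convergence in measure together with this equi-integrability (Vitali) yields $L^2$ convergence.

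The main obstacle is the bookkeeping in the previous step: controlling the mismatch on the small-measure pieces uniformly. The key point is that the equi-integrability coming from (A1) handles sets of small measure, while the choice of $\tau$ only affects constants and not the jump bound $N$.
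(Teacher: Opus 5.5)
Your proof is correct and is essentially the paper's argument. You use the same discretization, the same approximants $v^{\varepsilon,\tau}$ ($-1$ on $P_1$-intervals, $+1$ elsewhere), the $\tau$-independent bound on their jumps, and a closeness estimate between $u_\varepsilon$ and $v^{\varepsilon,\tau}$. You prove that estimate in $L^1$ and upgrade via the equi-integrability coming from (A1), whereas the paper proves $\int_0^1|u_\varepsilon-v^{\varepsilon,\tau}|^2\,dt\le 36\tau+o(1)$ directly by splitting off $\{|u_\varepsilon|>1+\tau\}$.

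The one loose step is your claim that the limits for different $\tau$ coincide \emph{because} the jump bound is $\tau$-independent. That bound only ensures the final limit lies in $BV((0,1);\{-1,1\})$. The identification of the limit should instead come from the triangle inequality $\|v^{\tau}-v^{\tau'}\|_{L^1}\le C'(\sqrt{\tau}+\sqrt{\tau'})$ along a diagonal subsequence, as in the paper's last estimate. Alternatively, note that $P_1,P_2$ at level $\tau$ are contained in those at level $\tau'>\tau$, so $v^{\varepsilon,\tau}$ and $v^{\varepsilon,\tau'}$ differ only on the boundedly many $P_3$-intervals.
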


\begin{proof}
Let $\{I_{k}(\varepsilon)\}_{k=1}^{n(\varepsilon)+1}$ be the partition defined above. With respect to this partition, we define the sequence of functions $(v^{\varepsilon,\tau})$ as follows:
\begin{equation*}
v^{\varepsilon,\tau}(t) =
\begin{cases}
-1 & \hbox{if}\,\, t \in I_{k}({\varepsilon})\,\, k \in P_{1}(\varepsilon), \\
1 & \hbox{if}\,\, t \in I_{k}({\varepsilon})\,\, k \in P_{2}(\varepsilon)\cup P_{3}(\varepsilon)\cup\{n(\varepsilon)+1\}.
\end{cases}
\end{equation*}
We now claim that, for every fixed $\tau \in (0,1/8)$, the sequence $(v^{\varepsilon,\tau})$ admits a subsequence that converges almost everywhere in $(0,1)$ to a function $v^{\tau}\in BV((0,1);\{-1,1\})$.

The uniform bound on $\#(P_{3}(\varepsilon)\cup \bar{P}_{3}(\varepsilon))$ implies that the number of jump points of $v^{\varepsilon,\tau}$ is uniformly bounded with respect to $\varepsilon$ and $\tau$. Hence, passing to a subsequence if necessary, we may assume that the number of jump points is constant. Let $(v^{\varepsilon_{j},\tau})$ be a subsequence of $(v^{\varepsilon,\tau})$ such that the number of jump points of $(v^{\varepsilon_{j},\tau})$ is constant and equal to $N({\tau})$ for some $N({\tau}) \in \mathbb{N}$. For each $j$, let $\{t_n(\varepsilon_j)\}_{n=1}^{N(\tau)}$ denote the jump points of $v^{\varepsilon_j,\tau}$. By passing to a further subsequence if necessary, we may assume that
$t_n(\varepsilon_j)\to t_n$ as $j\to\infty$ for all $n=1,\ldots,N(\tau)$, $v^{\varepsilon_j,\tau}\to v^{\tau}$ as $j\to\infty$ pointwise almost everywhere for some $v^{\tau}\in BV((0,1);\{-1,1\})$, and the jump set of $v^{\tau}$ is contained in $\{t_1,\ldots,t_{N(\tau)}\}$. 
Analogously, after passing to a subsequence if necessary, we may assume that $v^{\tau}\to v$ pointwise almost everywhere as $\tau\to 0$, for some $v\in BV((0,1);\{-1,1\})$. Hence, after passing to a further subsequence and letting first $\varepsilon\to0$ and then $\tau\to0$, we may assume that $v^{\varepsilon,\tau}$ converges pointwise almost everywhere \ to some $v \in BV((0,1);\{-1,1\})$. In particular, both $v^{\varepsilon,\tau}$ and $v^{\tau}$ are uniformly bounded in $L^{\infty}(0,1)$. Therefore, by the dominated convergence theorem, almost everywhere convergence implies convergence in $L^{2}(0,1)$. Finally, by a diagonal argument, we may extract a sequence $\varepsilon_i\to0$ and a subsequence $\tau_k\to0$ such that, for every $k\in\N$, $v^{\varepsilon_i,\tau_k}\to v^{\tau_k}$ as $i\to\infty$, while $v^{\tau_k}\to v$ as $k\to\infty$. Moreover, the sequence $\varepsilon_i$ is independent of $k$. We now show that $u_{\varepsilon_i}\to v$ as $i\to\infty$ in $L^{2}(0,1)$.

We now claim that for every $\tau \in (0,1/8)$, the following estimate holds:
\begin{equation}\label{36tau}
    \int_{0}^{1}\left|u_{\varepsilon}(t)-v^{\varepsilon,\tau}(t)\right|^2dt\le36\tau+o(1),
\end{equation}
as $\varepsilon\to0$. Indeed, we decompose 
\begin{equation*}
    \int_{0}^{1}\left|u_{\varepsilon}(t)-v^{\varepsilon,\tau}(t)\right|^2dt=\int_{\{\left|u_{\varepsilon}\right|>1+\tau\}}\left|u_{\varepsilon}(t)-v^{\varepsilon,\tau}(t)\right|^2dt+\int_{\{\left|u_{\varepsilon}\right|\le1+\tau\}}\left|u_{\varepsilon}(t)-v^{\varepsilon,\tau}(t)\right|^2dt.
\end{equation*}
We claim that 
\[
\lim_{\varepsilon\to0}\int_{\{\left|u_{\varepsilon}\right|>1+\tau\}}\left|u_{\varepsilon}(t)-v^{\varepsilon,\tau}(t)\right|^2dt=0.
\]
By Hypothesis {\rm (A2)}, we obtain the following upper bound:
\begin{align*}
    \int_{\{\left|u_{\varepsilon}\right|>1+\tau\}}\left|u_{\varepsilon}(t)-v^{\varepsilon,\tau}(t)\right|^2dt\le2\int_{\{\left|u_{\varepsilon}\right|>1+\tau\}}(u^{2}_{\varepsilon}(t)+1)dt\\
\le\frac{2}{c_{1}}\int_{0}^{1}W(u_{\varepsilon}(t))dt+2\left(\frac{c_{2}}{c_{1}}+1\right)|\{\left|u_{\varepsilon}\right|>1+\tau\}|.
\end{align*}
Consequently, by Proposition \ref{convergence in measure}
\[
\lim_{\varepsilon\to0}\int_{\{\left|u_{\varepsilon}\right|>1+\tau\}}\left|u_{\varepsilon}(t)-v^{\varepsilon,\tau}(t)\right|^2dt\le\lim_{\varepsilon\to0}\left(\frac{2C\varepsilon|\log{\varepsilon}|}{c_{1}}+2\left(\frac{c_{2}}{c_{1}}+1\right)|\{\left|u_{\varepsilon}\right|>1+\tau\}|\right)=0.
\]
Now we will show that
\[
\int_{\{\left|u_{\varepsilon}\right|\le1+\tau\}}\left|u_{\varepsilon}(t)-v^{\varepsilon,\tau}(t)\right|^2dt\le 36\tau+o(1),
\]
as $\varepsilon\to0$. We define the sets $J_1(\varepsilon)$, $J_2(\varepsilon)$ and $J_3(\varepsilon)$ as follows:
\begin{equation*}
    J_{1}(\varepsilon):=\bigcup_{k\in P_{1}(\varepsilon)}I_{k}(\varepsilon)\bigcap \{\left|u_{\varepsilon}\right|\le{1+\tau\}},\,\,J_{2}(\varepsilon):=\bigcup_{k\in P_{2}(\varepsilon)}I_{k}(\varepsilon)\bigcap \{\left|u_{\varepsilon}\right|\le{1+\tau\}},
\end{equation*}
\begin{equation*}
    J_{3}(\varepsilon):=I_{n(\varepsilon)+1}\bigcup_{k\in P_{3}(\varepsilon)}I_{k}(\varepsilon)\bigcap \{\left|u_{\varepsilon}\right|\le{1+\tau\}}.
\end{equation*}
Then, we have 
\begin{multline*}
    \int_{\{\left|u_{\varepsilon}\right|\le1+\tau\}}\left|u_{\varepsilon}(t)-v^{\varepsilon,\tau}(t)\right|^2dt= \int_{J_{1}(\varepsilon)}\left|u_{\varepsilon}(t)+1\right|^2dt+\int_{J_{2}(\varepsilon)}\left|u_{\varepsilon}(t)-1\right|^2dt\\+\int_{J_{3}(\varepsilon)}\left|u_{\varepsilon}(t)-1\right|^2dt\le\int_{J_{1}(\varepsilon)}\left|u_{\varepsilon}(t)+1\right|^2dt+\int_{J_{2}(\varepsilon)}\left|u_{\varepsilon}(t)-1\right|^2dt\\+(2+\tau)^{2}(\#P_{3}(\varepsilon)+1)\theta(\varepsilon).
\end{multline*}
Then, by the uniform bound on $\#P_3(\varepsilon)$, we obtain
\begin{equation*}
\int_{\{\left|u_{\varepsilon}\right|\le1+\tau\}}
\left|u_{\varepsilon}(t)-v^{\varepsilon,\tau}(t)\right|^2dt
\le\int_{J_{1}(\varepsilon)}
\left|u_{\varepsilon}(t)+1\right|^2dt+\int_{J_{2}(\varepsilon)}
\left|u_{\varepsilon}(t)-1\right|^2dt+o(1),
\end{equation*}
as $\varepsilon\to0$. We claim that 
\begin{align*}
     \int_{J_{1}(\varepsilon)}\left|u_{\varepsilon}(t)+1\right|^2dt\le18\tau,\,\int_{J_{2}(\varepsilon)}\left|u_{\varepsilon}(t)-1\right|^2dt\le18\tau.
\end{align*}
We give the proof for the first estimate; the second one follows similarly
 \begin{multline*}
     \int_{J_{1}(\varepsilon)}\left|u_{\varepsilon}(t)+1\right|^2dt=\int_{\cup_{k\in P_{1}}A^{c}_k(\varepsilon)\cap\{\left|u_{\varepsilon}\right|\le\tau+1\}}\left|u_{\varepsilon}(t)+1\right|^2dt\\+\int_{\cup_{k\in P_{1}(\varepsilon)}A_k(\varepsilon)\cap\{\left|u_{\varepsilon}\right|\le\tau+1\}}\left|u_{\varepsilon}(t)+1\right|^2dt\le2\tau(2+\tau)^{2}+\tau^{2}\le18\tau.
 \end{multline*}
 The above estimates yield the claimed result. Then, by \eqref{36tau} and the convergence of $v^{\varepsilon_i,\tau_k}\to v^{\tau_k}$ in $L^{2}(0,1)$ as $i\to\infty$, we obtain
\begin{multline*} \limsup_{i\to\infty}||u_{\varepsilon_{i}}-v|| \le\limsup_{i\to\infty}||u_{\varepsilon_{i}}-v^{\varepsilon_{i},\tau_{k}}||_{L^{2}(0,1)}+\limsup_{i\to\infty}||v^{\varepsilon_{i},\tau_{k}}-v^{\tau_{k}}||_{L^{2}(0,1)}\\+||v^{\tau_{k}}-v||_{L^{2}(0,1)}\le6\sqrt{\tau_{k}}+||v^{\tau_{k}}-v||_{L^{2}(0,1)} 
\end{multline*}
for every $k \in \mathbb{N}$. By arbitrariness of $k$, letting $k \to \infty$ yields $\lim_{i\to\infty} \|u_{\varepsilon_i}-v\|_{L^2(0,1)} = 0,$ that is, $u_{\varepsilon_i} \to v$ strongly in $L^2(0,1)$. Finally, passing to a further subsequence, we may assume that the convergence holds almost everywhere in $(0,1)$. This concludes the proof of the compactness result.
\end{proof}

\section{Proof of the Gamma-convergence result}

Theorem \ref{main}(ii) can be restated as follows.

\begin{theorem}\label{sub-main}
Let $W:\R\to[0,+\infty)$ be a double-well potential that satisfies assumptions {\rm (A1)} and {\rm (A2)}, and let $a:\R^2\to[\alpha_{a},\beta_{a}]$ satisfy assumption {\rm (A3)}. Then the following holds: 

\smallskip
{\rm(i) (liminf inequality)} every sequence $(u_{\varepsilon})_{\varepsilon}$ converging to $u$ in $L^2(0,1)$ satisfies 
$\liminf\limits_{\varepsilon\to0}F_{\varepsilon}(u_{\varepsilon})\geq F(u)$;

\smallskip
{\rm(ii) (limsup inequality)} for every $u\in\BV\left((0,1);\{-1,1\}\right)$ there exist $(u_{\varepsilon})_{\varepsilon}$ converging to $u$ in $L^2(0,1)$ such that 
 $\lim\limits_{\varepsilon\to 0}F_{\varepsilon}(u_{\varepsilon})=F(u)$.
\end{theorem}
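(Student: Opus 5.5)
The proof follows the scale-splitting heuristic in formula \eqref{sht}. Around each jump point the $H^{1/2}$ energy is spread logarithmically over all length scales between $\varepsilon$ and $1$, roughly $8\,d\log r/|\log\varepsilon|$ per scale $r$. Scales $r\in(\varepsilon,\delta)$ carry a total weight $1-\lambda$, and there the coefficient $a(s/\delta,t/\delta)$ is essentially frozen at its diagonal value near the jump. Scales $r\in(\delta,1)$ carry weight $\lambda$, and there the coefficient is averaged. Both inequalities are organized around this splitting. When $\lambda_0\ge1$ only the averaged range survives; when $\lambda_0=0$ only the frozen range does.

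\emph{Liminf inequality.} Take $u_\varepsilon\to u$ with bounded energy. By Proposition~\ref{compactness}, $u\in BV((0,1);\{-1,1\})$; let $N=\#S(u)$. Fix $\eta>0$ and isolate disjoint neighbourhoods $(t_i-\eta,t_i+\eta)$ of the jumps. The problem then localizes to a single jump $t_0$. The point is to redo the argument of Lemma~\ref{lower bound lemma} with the coefficient kept inside the double integral. Recall how that lemma works: it bounds $\int\int_{A\times B}|s-t|^{-2}$ from below, with $A$ and $B$ the sets near $-1$ and $+1$, and the logarithm comes from pairs at mutual distances between $\varepsilon$ and $|I|$. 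I will apply the discretization of Section~\ref{discr} at two levels.

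First, at the scale $\theta(\varepsilon)=\delta\cdot\varepsilon^{-\gamma}$ with small $\gamma$ (or $\delta$ itself), chosen as a multiple of $\delta$. Proposition~\ref{3.2} locates the transition inside a cell of size $\sim\theta(\varepsilon)$ around some point $x_\varepsilon$. On that cell, and on the pairs at distance $\le\delta\varepsilon^{\gamma}$, continuity of $a$ gives $a\ge a(x_\varepsilon/\delta,x_\varepsilon/\delta)-o(1)\ge a_{\min}-o(1)$. The weighted version of Lemma~\ref{lower bound lemma} (with $\alpha_a$ replaced by the local minimum) then yields at least $8(a_{\min}-o(1))(1-\lambda)$, since the logarithmic range is $\log(\delta/\varepsilon)/|\log\varepsilon|\to1-\lambda_0$.

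Second, for pairs at distances $r\in(\delta\varepsilon^{-\gamma},\eta)$, I compute the interaction between the region where $u_\varepsilon\approx-1$ to the left and $u_\varepsilon\approx1$ to the right, using the dyadic annuli $\{r<|s-t|<2r\}$. On each such annulus $u_\varepsilon$ is, up to small measure, the step function. The integral of $a(s/\delta,t/\delta)|s-t|^{-2}$ over $\{s<x<t\}$ restricted to the annulus equals $\bar a\log 2\,(1+o(1))$ by periodic averaging (Riemann–Lebesgue type), because $r\gg\delta$. Summing over annuli gives $8\bar a\,\lambda$. Letting $\gamma,\eta,\tau\to0$ yields $F(u)$.

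The \emph{main obstacle} is making this two-scale lower bound rigorous when $u_\varepsilon$ is not exactly a step: transition layers may sit anywhere between scales $\varepsilon$ and $\delta$. My first attempt would be a scale-by-scale version of Lemma~\ref{lower bound lemma}, applied on nested intervals $I^j$ of sizes $2^j\varepsilon$ centred at $x_\varepsilon$. By Proposition~\ref{convergence in measure}-type estimates, the densities $a_k,b_k$ stay bounded below on most of these intervals. The energy of the annulus $\{2^{j}\varepsilon<|s-t|<2^{j+1}\varepsilon\}$ is then bounded below by $(8\log 2-o(1))$ times the local infimum of $a$ for $2^j\varepsilon\le\delta$, and by the average $\bar a$ for $2^j\varepsilon\gg\delta$. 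Because these annuli are disjoint in $(s,t)$, the contributions add up.

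\emph{Limsup inequality.} It suffices to treat one jump, localized away from the others; the interaction between distinct jumps is $O(1)$ and so negligible after division by $|\log\varepsilon|$. Choose the jump point at $x_\varepsilon=\delta(t^*+k_\varepsilon)$, where $a(t^*,t^*)=a_{\min}$ and $k_\varepsilon$ is chosen so that $x_\varepsilon\to t_0$; the error in $u$ is $O(\delta)$ in $L^2$. Use the optimal profile of \cite{ABS}: $u_\varepsilon$ is linear from $-1$ to $1$ on $(x_\varepsilon-\varepsilon,x_\varepsilon+\varepsilon)$ and equal to $u$ elsewhere. The potential term is $O(1/|\log\varepsilon|)$. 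The double integral splits into pairs with $|s-t|<\varepsilon$, which contribute $O(1)$; pairs at distance in $(\varepsilon,\delta\varepsilon^{\gamma})$ with both points near $x_\varepsilon$, where $a\le a_{\min}+o(1)$ and the contribution is $8(a_{\min}+o(1))\log(\delta/\varepsilon)$ up to $O(\gamma|\log\varepsilon|)$; and pairs at distance larger than $\delta\varepsilon^{-\gamma}$, where averaging gives $8(\bar a+o(1))\log(1/\delta)$. Dividing by $|\log\varepsilon|$ and letting $\gamma\to0$ gives $8((1-\lambda)a_{\min}+\lambda\bar a)$. When $\lambda_0>1$ the first range is empty and all scales are averaged, consistent with the choice $\lambda=1$.
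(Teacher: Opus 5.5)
Your limsup part is essentially the paper's construction: the same linear profile of width $2\varepsilon$ centred at $\delta(t^*+k_\varepsilon)$ with $a(t^*,t^*)=a_{\min}$, a frozen coefficient on near-diagonal pairs, and periodic averaging on far pairs. Your $\gamma$-buffer bookkeeping also works, including for $\lambda_0\ge1$. The genuine gap is in the averaged part of the liminf inequality.

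\textbf{The averaged lower bound.} You assume that on every dyadic annulus $\{r<|s-t|<2r\}$ with $r\in(\delta\varepsilon^{-\gamma},\eta)$, the function $u_\varepsilon$ is, up to small measure, the step at $x_\varepsilon$. This is not justified. $L^2$ convergence controls $u_\varepsilon$ only at fixed scales. A bounded-energy sequence may carry extra transitions, or islands of the wrong phase, near $t_0$ at intermediate scales $\varepsilon^{\mu}$ with $0<\mu<1$. The assertion then fails at those scales, and you give no argument bounding the number of bad scales.

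Your proposed repair does not work either:
\begin{itemize}
\item Proposition \ref{convergence in measure} controls only the set where $u_\varepsilon$ is far from both wells. It says nothing about the proportion of the two phases on intervals of size $2^j\varepsilon$.
\item Lemma \ref{lower bound lemma} bounds the energy of a whole square $I\times I$, with additive errors $\log(a(\varepsilon)b(\varepsilon))+\log\big(\sigma(\tau)/8(1-\tau)^2\big)$ of order one. A single dyadic scale carries only $8\log 2$. Differences of lower bounds on nested intervals are not lower bounds, and an $O(1)$ loss per scale over $\sim|\log\varepsilon|$ scales is fatal.
\item Density bounds alone do not give the sharp per-annulus constant. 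On an interval of length $4r$, two phases finely interleaved with density $\frac12$ interact in $\{r<|s-t|<2r\}$ by about $2-\log 2$. That is below the segregated value $2\log 2$, so the sharp constant is a global effect.
\end{itemize}

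\textbf{The missing idea} is the paper's Step 2.
\begin{itemize}
\item Discretize $(t_0-\eta,t_0+\eta)$ at a scale $\theta''(\varepsilon)$ with $\log\theta''(\varepsilon)/\log\varepsilon\to\theta''<\lambda$, chosen as an integer multiple of $\delta$.
\item The average of $a(s/\delta,t/\delta)$ over each product of cells $I''_k\times I''_m$ is then exactly $\bar a$.
\item So a mostly-$(-1)$ cell $k$ and a mostly-$(+1)$ cell $m$ interact by at least $8(1-\tau)^2(\bar a-6\tau\beta_a)/(|m-k|+1)^2$, wherever they lie.
\item The discrete rearrangement inequality (Lemma \ref{rearrangement lemma}) bounds the double sum below by its value for the segregated configuration. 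That value is a harmonic sum of order $\log n''(\varepsilon)\sim\theta''|\log\varepsilon|$.
\end{itemize}
This needs no information on the geometry of $u_\varepsilon$ at vanishing scales.

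\textbf{A fixable issue in your first step.} On a cell of size $\delta\varepsilon^{-\gamma}$ (or $\delta$) the coefficient is not frozen, so $a\ge a(x_\varepsilon/\delta,x_\varepsilon/\delta)-o(1)$ is false there. Lemma \ref{lower bound lemma} also concerns full squares, so it cannot be restricted to near-diagonal pairs. Instead, discretize at $\varepsilon^{\theta'}$ with $\lambda<\theta'<1$. Use $L^2$ convergence to show that a transition cell exists near $t_0$, and apply the lemma on that cell with coefficient $a_{\min}-o(1)$. This gives $8(1-\tau)^2(1-\theta')a_{\min}$, as in the paper's Step 1. Removing from the $\theta''$-partition the cells that contain this cell keeps the two contributions disjoint.
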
 

We separately prove claim (i) and claim (ii) of Theorem \ref{sub-main} in the following two sections.
In the proofs, we will use the localized functionals defined for every $\varepsilon > 0$, 
$u \in H^{1/2}(0,1)$ and for every open interval $I \subset (0,1)$, by setting
\begin{equation*}
F_{\varepsilon}(u,I) :=
\frac{1}{\varepsilon\left|\log\varepsilon\right|}\int_{I} W(u)\,dt+\frac{1}{\left|\log\varepsilon\right|}
\int_{I}\int_{I}a\Big(\frac{s}{\delta(\varepsilon)},\frac{t}{\delta(\varepsilon)}
    \Big)
\frac{\left|u(s)-u(t)\right|^2}{\left|s-t\right|^2}
dsdt.
\end{equation*}

\subsection{Proof of the liminf inequality}
This section is devoted to the proof of Theorem \ref{sub-main}(i). The argument relies on a localization of the energy near the jump points of the limit function and on separate estimates of the energy contributions arising at different scales.

\bigskip
Let $(u_{\varepsilon})_{\varepsilon}\in L^2(0,1)$ be such that $u_{\varepsilon}\to u$ as $\varepsilon\to 0$ in $L^2(0,1)$. 
We can assume that $\liminf_{\varepsilon\to0}F_{\varepsilon}(u_{\varepsilon})\le C<\infty$ and thus by Proposition \ref{compactness} we have that $u\in\BV((0,1);\{-1,1\})$. Otherwise, there is nothing to prove. Let $(\varepsilon_{i})_{i\ge1}$ be a sequence that converges to $0$ such that  
$\liminf_{\varepsilon\to0}F_{\varepsilon}(u_{\varepsilon})=\lim_{i\to\infty}F_{\varepsilon_{i}}(u_{\varepsilon_{i}})$. 
 Then, by the compactness result (possibly extracting a further subsequence), we can assume that $u_{\varepsilon_{i}}\to u$ as $i\to\infty$ almost everywhere in $(0,1)$.
 We will work with that subsequence and we write $u_{\varepsilon_{i}}=u_{\varepsilon}$. 
 
 Let $t_n$, with $n\in\{1,\ldots,\#S(u)\}$, be the jump points of $u$ and let $\rho>0$ be such that $\#\{(t_n-\rho,t_n+\rho)\cap\{t_m: 1\le m\le \#S(u)\}=1$ and $(t_n-\rho,t_n+\rho)
 \subset(0,1)$ for any $n\in\{1,...,\#S(u)\}$. Then, we have 
 \begin{equation*}
     \lim_{\varepsilon\to0}F_{\varepsilon}(u_{\varepsilon})\ge\sum_{n=1}^{\#S(u)}\liminf_{\varepsilon\to0}F_{\varepsilon}(u_{\varepsilon},(t_n-\rho,t_n+\rho)).
 \end{equation*}
 We claim that for every $\tau\in(0,\frac{1}{8})$, $\theta'\in(\lambda,+\infty)$, $\theta''\in(-\infty,\lambda)$ and every $n\in\{1,...,\#S(u)\}$ we have
 \begin{equation}\label{local liminf}
     \liminf_{\varepsilon\to0}F_{\varepsilon}(u_{\varepsilon},(t_n-\rho,t_n+\rho))\ge8(1-\tau)^{2}((1-\theta')a_{\min}+\theta''(\bar{a}-6\beta_{a}\tau)).
 \end{equation}
This will prove claim (i) of the theorem. Indeed, note that the right-hand side is independent of $\tau$, so we may pass to the limit as $\tau\to0$. Then we obtain 
 \[
  \liminf_{\varepsilon\to0}F_{\varepsilon}(u_{\varepsilon},(t_n-\rho,t_n+\rho))\ge8((1-\theta')a_{\min}+\theta''\bar{a}).
 \]
 for every $\theta'\in(\lambda,+\infty)$ and every $\theta''\in(-\infty,\lambda)$. Again, passing to the limit as $\theta',\theta''\to\lambda$ we obtain that 
 \[
  \liminf_{\varepsilon\to0}F_{\varepsilon}(u_{\varepsilon},(t_n-\rho,t_n+\rho))\ge8((1-\lambda)a_{\min}+\lambda\bar{a}).
 \]
 Finally, summing over $n$ yields the liminf inequality.

 By the above argument, we can assume that $u$ has only one jump point $t_1$. Since the proof of \eqref{local liminf} is rather technical and lengthy, we divide it into two steps.

\def\thet{\theta(\epsilon)}

\smallskip
\noindent\textbf{Step 1.} 
  In this step we prove that there exists a subinterval $I^\e_{1}\subset (t_1-\rho,t_1+\rho)$ such that 
  \begin{equation}\label{first-lb}    \liminf_{\varepsilon\to0}F_{\varepsilon}(u_{\varepsilon},I^\e_1)\ge 8(1-\tau)^{2}(1-\theta')a_{\min}.
 \end{equation}
 for every $\theta'\in(\lambda,\infty)$ and every $\tau\in(0,\frac{1}{8}).$
 This inequality is trivial if $\lambda=1$ or $\theta'\ge 1$. Hence, we can suppose that $\lambda=
\lambda_0<1$ and $\lambda<\theta'<1$. Following the notation of Section \ref{discr}, we consider a family $(\theta'(\varepsilon))_{\e}$ that satisfies \eqref{theta-eps} with $\theta$ replaced by $\theta'$.
Correspondingly, we consider a partition of $(t_1-\rho,t_1+\rho)$ as follows:
 \begin{equation*}
     (t_1-\rho,t_1+\rho)=\bigcup_{k=-n(\varepsilon)-1}^{n(\varepsilon)}I'_{k}(\varepsilon),
 \end{equation*}
 where $n(\varepsilon)=\lfloor\frac{\rho}{\theta'(\e) }\rfloor$ and
 \[
 I'_{k}(\varepsilon)=
 \begin{cases}
   (t_{1}-\rho,t_{1}-n({\varepsilon})\theta'(\e) ] &
   \hbox{if}\,\,k=-n(\varepsilon)-1\\
   [k\theta'(\e) ,(k+1)\theta'(\e) ] & \hbox{if}\,\,k\in\{-n(\varepsilon),...,n(\varepsilon)-1\}\\
   [n(\varepsilon)\theta'(\e) +t_{1},t_{1}+\rho) & \hbox{if}\,\,k=n(\varepsilon).
 \end{cases}
 \]
We define the corresponding sets $A'_{k}(\e)$, $B'_{k}(\e)$, their measures $a'_{k}(\e),b'_{k}(\e)$, and the associated index sets $P'_1(\e)$, $P'_2(\e)$, $P'_3(\e)$, and $\bar P'_3(\e)$. Using the same argument as in the proof of compactness, we can show that the following hold.\\
(a) For every $\tau\in(0,\frac{1}{8})$, there is $\varepsilon(\tau)>0$ such that for all $(0,\varepsilon(\tau))$
\[
P'_{1}(\varepsilon)\cup P'_{2}(\varepsilon)\cup P'_{3}(\varepsilon)=\{-n(\varepsilon),...,n(\varepsilon)-1\}.
\]

\noindent(b) $\#\big(P'_{3}(\varepsilon)\cup \bar{P}'_{3}(\varepsilon)\big)$ is uniformly bounded, independent of $\varepsilon$ and $\tau$.

\smallskip

\noindent (c) There exist $\bar{\varepsilon}(\tau)>0$ such that
$P'_{3}(\varepsilon)\cup \bar{P}'_{3}(\varepsilon)\neq\emptyset$,
for every $\varepsilon\in(0,\bar{\varepsilon}(\tau))$. 

\bigskip

These properties allow us to identify sufficiently many intervals in which $u_\varepsilon$ transitions between the two wells, yielding the desired lower bound. It suffices to prove part (c), as the remaining parts follow as in Proposition \ref{3.2}.

 Assume by contradiction that there is a subsequence $(\varepsilon_{i_{p}})_{p\ge1}$ of $(\varepsilon_{i})_{i\ge1}$ and $N\in\N$ such that for all $p\ge N$ we get $P_{3}(\varepsilon_{i_{p}})\cup \bar{P}_{3}(\varepsilon_{i_{p}})=\emptyset$ and we can choose $N$ large enough such that $\varepsilon_{i_{p}}\in(0,\varepsilon(\tau))$ for all $p\ge N$. This implies that $P_{1}(\varepsilon_{i_{p}})=\emptyset$ or $P_{2}(\varepsilon_{i_{p}})=\emptyset$ for all $p\ge N$. Suppose $P_{2}(\varepsilon_{i_{p}})=\emptyset$ for all $p\ge N$ and we know that $u_{\varepsilon_{i_{p}}}|_{(t_{1}-\rho,t_{1}+\rho)}\to u|_{(t_{1}-\rho,t_{1}+\rho)}$ as $p\to\infty$. This implies that on the one hand 
\begin{equation}\label{4.2}
    |\{t\in(t_{1}-n(\varepsilon)\theta'(\e) ,t_{1}+n(\varepsilon)\theta'(\e) ):\,u_{\varepsilon}(t)\le\tau-1\}|\ge2n(\varepsilon)\theta'(\e) (1-2\tau),
\end{equation}
and, on the other hand the measure 
$
|\{t\in[t_{1}-n(\varepsilon_{i_{p}}){\theta'(\e)a(\varepsilon_{i_{p}})},t_{1}+n(\varepsilon_{i_{p}}){\theta'(\e)a(\varepsilon_{i_{p}})}]:\,u_{\varepsilon_{i_{p}}}(t)\le\tau-1\}|$
tends to
$\rho$ as $p\to\infty$. 
Indeed, we can show that
\begin{multline*}
    \lim_{\varepsilon\to0}|\{t\in(t_{1}-\rho,t_{1}+\rho);\,u_{\varepsilon}(t)\le\tau-1\}|=\lim_{\varepsilon\to0}|\{t\in(t_{1}-\rho,t_{1}+\rho);\,u_{\varepsilon}(t)\ge1-\tau\}|=\rho.
\end{multline*}
We can assume the following. 
\begin{equation*}
    u|_{(t_{1}-\rho,t_{1}+\rho)}(t)=
    \begin{cases}
     -1 & t\in(t_{1}-\rho,t_{1})\\
     1& t\in(t_{1},t_{1}+\rho).
    \end{cases}
\end{equation*} 
Otherwise, we argue analogously after replacing $u$ with $-u$. Let $\kappa>0$ be fixed. Then, according to Egorov's theorem, there exists $G(\kappa)\subset(t_{1}-\rho,t_{1})$ such that $u_{\varepsilon}|_{G(\kappa)}\to-1$ uniformly in $G(\kappa)$ and $|(t_{1}-\rho,t_{1})\setminus G(\kappa)|<\kappa$. This implies there is $\varepsilon(\kappa)>0$ such that 
$
|\{t\in(t_{1}-\rho,t_{1}):\,u_{\varepsilon}(t)\le\tau-1\}|\ge|G(\kappa)|\ge\rho-\kappa$, 
for every $\e\in(0,\e(\kappa))$. Then we get 
\[
\liminf_{\varepsilon\to0}|\{t\in(t_{1}-\rho,t_{1}+\rho);\,u_{\varepsilon}(t)\le\tau-1\}|\ge\rho-\kappa.
\]
Since the left-hand side is independent of $\kappa$, we may let $\kappa\to0$. Passing to the limit as $\kappa\to0$, we obtain 
\[
\liminf_{\varepsilon\to0}|\{t\in(t_{1}-\rho,t_{1}+\rho);\,u_{\varepsilon}(t)\le\tau-1\}|\ge\rho.
\]
Similarly, we can show that $\liminf_{\varepsilon\to0}|\{t\in(t_{1}-\rho,t_{1}+\rho);\,u_{\varepsilon}(t)\ge1-\tau\}|\ge\rho.$

On the other hand,
\begin{multline*}
    \limsup_{\varepsilon\to0}|\{t\in(t_{1}-\rho,t_{1}+\rho);\,u_{\varepsilon}(t)\le\tau-1\}|\\+\limsup_{\varepsilon\to0}|\{t\in(t_{1}-\rho,t_{1}+\rho);\,u_{\varepsilon}(t)\ge1-\tau\}|\le 2\rho.
\end{multline*}
Thus, we can conclude 
\begin{multline*}
    \lim_{\varepsilon\to0}|\{t\in(t_{1}-\rho,t_{1}+\rho);\,u_{\varepsilon}(t)\le\tau-1\}|\\=\lim_{\varepsilon\to0}|\{t\in(t_{1}-\rho,t_{1}+\rho);\,u_{\varepsilon}(t)\ge1-\tau\}|=\rho.
\end{multline*}
Consequently, by \eqref{4.2} we get
\begin{multline*}
     \rho=\lim_{\varepsilon\to0}|\{t\in(t_{1}-\rho,t_{1}+\rho);\,u_{\varepsilon}(t)\le\tau-1\}|\\=\lim_{\varepsilon\to0}|\{t\in[t_{1}-n(\varepsilon)\theta'(\e) ,t_{1}+n(\varepsilon)\theta'(\e) ]:\,u_{\varepsilon}(t)\le\tau-1\}|\ge2\rho(1-2\tau)\ge\frac{3\rho}{2},
\end{multline*}
which, since $1-2\tau
\ge\frac34$, contradicts the nonexistence of $\bar{\varepsilon}(\tau)$.

We now define a suitable subinterval $I^\varepsilon_1$ by
    \[
    I^{\e}_{1}=\begin{cases}
    [k\theta'(\e) ,(k+1)\theta'(\e) ] & \hbox{if}\,\, k\in P'_{3}(\varepsilon)\\
    [k\theta'(\e) ,(k+2)\theta'(\e) ] & \hbox{if}\,\, k\in \bar{P}'_{3}(\varepsilon).
    \end{cases}
    \]
and show that
$\liminf\limits_{\varepsilon\to0}F_{\varepsilon}(u_{\varepsilon},I^{\e}_{1})\ge8(1-\tau)^{2}(1-\theta')a_{\min}$. Indeed, since $\lambda<\theta'$, we have $|s-t|/\delta(\e)\le2\theta'(\e)/\delta(\e)\to0\,\,\text{as}\,\,\e\to0$ for every $s,t\in I^{\e}_{1}$. Therefore, 
$
a\big(\frac{s}{\delta(\e) },\frac{t}{\delta(\e) }
    \big)=a\big(\frac{t}{\delta(\e) },\frac{t}{\delta(\e) }\big)+o(1)\ge a_{\min}+o(1)
$ 
as $\e\to 0$, by the uniform continuity of $a$. Consequently, we obtain
\begin{multline*}
F_{\varepsilon}(u_{\varepsilon},I^{\e}_{1})=\frac{1}{\varepsilon\left|\log\varepsilon\right|}\int_{I^{\e}_{1}} W(u_{\varepsilon})\,dt\\+\frac{1}{\left|\log\varepsilon\right|}
\int_{I^{\e}_{1}}\int_{I^{\e}_{1}}a\Big(\frac{s}{\delta(\e) },\frac{t}{\delta(\e) }
    \Big)
\frac{\left|u_{\varepsilon}(s)-u_{\varepsilon}(t)\right|^2}{\left|s-t\right|^2}\
ds\,dt
\\
\ge \frac{1}{\varepsilon\left|\log\varepsilon
\right|}\int_{I^{\e}_{1}} W(u_{\varepsilon})dt+\frac{a_{\min}+o(1)}{\left|\log\varepsilon\right|}
\int_{I^{\e}_{1}}\int_{I^{\e}_{1}}
\frac{\left|u_{\varepsilon}(s)-u_{\varepsilon}(t)\right|^2}{\left|s-t\right|^2}
dsdt  
\end{multline*}
    as $\varepsilon\to0$. Applying Lemma \ref{lower bound lemma}, we obtain
\begin{multline*}
F_{\varepsilon}(u_{\varepsilon},I^{\e}_{1}) \ge
\frac{8(1-\tau)^{2}(a_{\min}+o(1))}{|\log{\varepsilon}|}
\Big( 2\log{\tau} + |\log\varepsilon|
+ \log|I^{\e}_{1}| 
+ \log\frac{\sigma(\tau)}{8\beta_{a}(1-\tau)^{2}} \Big)
\end{multline*}
    as $\varepsilon\to0$, from which we have 
    \eqref{first-lb}. 

\medskip
\noindent \textbf{Step 2.}  In this step, we prove that 
  \begin{equation}\label{second-lb}    \liminf_{\varepsilon\to0}F_{\varepsilon}(u_{\varepsilon},(t_{1}-\rho,t_{1}+\rho)\setminus I^\e_1)\ge 8\theta''(1-\tau)^{2}(\bar{a}-6\tau\beta_a).
 \end{equation}
 for every $\theta''\in(-\infty,\lambda)$ and every $\tau\in(0,\frac{1}{8})$.
 This inequality is trivial if $\lambda=0$ or $\theta''\le 0$. Hence, we can suppose that $\lambda=
\lambda_0>0$ and $0<\theta''<\lambda$. Similarly to Step~1, we consider a family $(\theta''(\e))_{\e}$ that satisfies \eqref{theta-eps} with $\theta$ replaced by $\theta''$. Moreover, we may assume without loss of generality that $\theta''(\varepsilon)$ is an integer multiple of $\delta(\varepsilon)$ for some $N(\theta''(\e))$, e.g. $\theta''(\varepsilon) = \left\lfloor \varepsilon^{\theta''}/{\delta(\varepsilon)} \right\rfloor \delta(\varepsilon)$. We now repeat the construction from Step 1 at the scale $\theta''(\varepsilon)$. More precisely, we consider the corresponding partition of $(t_1-\rho,t_1+\rho)$ obtained by replacing $\theta'(\varepsilon)$ with $\theta''(\varepsilon)$, and we define the associated sets and index families as in Step 1. Moreover, properties (a)–-(c) in Step 1 still hold. Recall that by Step 1 there exists $I^{\e}_{1}\in(t_{1}-\rho,t_{1}+\rho)$ such that
 \begin{equation*}
\liminf_{\varepsilon\to0}F_{\varepsilon}(u_{\varepsilon},I^{\e}_{1})\ge8(1-\tau)^{2}(1-\theta')a_{\min}
 \end{equation*}
 for every $\lambda<\theta''<1$. We have $\theta''<\lambda<\theta'$ and thus $\theta'(\e)\ll\theta''(\e)$. Since the partition associated with $\theta''(\e)$ is obtained at a larger scale, each interval in the $\theta'(\e)$-partition is contained in the union of at most two consecutive intervals of the $\theta''(\e)$-partition. In particular, 
we have
$I^\e_{1} \subset I''_{k_1}(\e)\cup I''_{k_{1}+1}(\e)$
for some indices $k_{1}=k_{1}(\e)$. Consequently, the lower bound obtained on $I^\e_{1}$ in Step 1 can be transferred to the union of these subintervals, and the same argument applies at the $\theta''(\e)$-scale. This implies that
 \begin{multline*}
     \liminf_{\e\to0}F_{\e}(u_{\e},(t_1-\rho,t_1+\rho))\ge \liminf_{\e\to0}F_{\e}(u_{\e},I''_{k_1}(\e)\cup I''_{k_{1}+1}(\e))\\+\liminf_{\e\to0}\frac{1}{\left|\log{\e}\right|}\int_{I''(\varepsilon)}\int_{I''(\e)}a\big(\frac{s}{\delta(\e) },\frac{t}{\delta(\e) }\big)\frac{\left|u_{\e}(s)-u_{\e}(t)\right|^2}{\left|s-t\right|^2}ds\,dt\\\ge8(1-\tau)^{2}(1-\theta')a_{\min}
     \\+\liminf_{\e\to0}\frac{1}{\left|\log{\e}\right|}\int_{I''(\varepsilon)}\int_{I''(\e)}a\big(\frac{s}{\delta(\e) },\frac{t}{\delta(\e) }\big)\frac{\left|u_{\e}(s)-u_{\e}(t)\right|^2}{\left|s-t\right|^2}ds\,dt,
 \end{multline*}
 where $P''_{0}(\e)=P''_{3}(\e)\cup\{k_{1},k_{1}+1\}$ and $I''(\e):=\bigcup_{k\notin P''_{0}(\e)}I''_{k}(\e)$. Note that $I''(\e)\in(t_1-\rho,t_1+\rho)\setminus I^{\e}_1$ and hence,
\begin{multline*}\liminf_{\varepsilon\to0}F_{\varepsilon}(u_{\varepsilon},(t_{1}-\rho,t_{1}+\rho)\setminus I^\e_1)\\\ge\liminf_{\e\to0}\frac{1}{\left|\log{\e}\right|}\int_{I''(\varepsilon)}\int_{I''(\e)}a\big(\frac{s}{\delta(\e) },\frac{t}{\delta(\e) }\big)\frac{\left|u_{\e}(s)-u_{\e}(t)\right|^2}{\left|s-t\right|^2}ds\,dt.
\end{multline*}

 We now show that 
 \begin{equation}\label{liminf average term}
     \liminf_{\varepsilon\to0}\frac{1}{\left|\log{\varepsilon}\right|}\int_{I''(\e)}\int_{I''(\e)}a\Big(\frac{s}{\delta(\e) },\frac{t}{\delta(\e) }\Big)\frac{\left|u_{\varepsilon}(s)-u_{\varepsilon}(t)\right|^2}{\left|s-t\right|^2}ds\,dt\ge8\theta''(1-\tau)^{2}(\bar{a}-6\beta_{a}\tau),
 \end{equation}
 which proves \eqref{second-lb}. 
 
 We estimate the integral in \eqref{liminf average term} as follows.
 \begin{multline*}    \int_{I''(\e)}\int_{I''(\e)}a\Big(\frac{s}{\delta(\e) },\frac{t}{\delta(\e) }\Big)\frac{\left|u_{\varepsilon}(s)-u_{\varepsilon}(t)\right|^2}{\left|s-t\right|^2}ds\,dt\\\ge2\sum_{k\in\widetilde{P}''_{1}(\varepsilon)}\sum_{m\in\widetilde{P}''_{2}(\varepsilon)}\int_{I''_{k}(\varepsilon)}\int_{I''_{m}(\varepsilon)}a\Big(\frac{s}{\delta(\e) },\frac{t}{\delta(\e) }\Big)\frac{\left|u_{\varepsilon}(s)-u_{\varepsilon}(t)\right|^2}{\left|s-t\right|^2}ds\,dt\\\ge\sum_{k\in\widetilde{P}''_{1}(\varepsilon)}\sum_{m\in\widetilde{P}''_{2}(\varepsilon)}\frac{8(1-\tau)^{2}}{(|m-k|+1)^{2}(\theta''(\e))^{2}}\int_{A''_{k}(\varepsilon)}\int_{B''_{m}(\varepsilon)}a\Big(\frac{s}{\delta(\varepsilon)},\frac{t}{\delta(\e)}\Big)dtds, 
 \end{multline*}
 where
$\widetilde{P}''_1(\varepsilon)=P''_{1}(\varepsilon)\setminus P''_{0}(\varepsilon)$, $\widetilde{P}''_{2}(\varepsilon)=P''_{2}(\varepsilon)\setminus P''_{0}(\varepsilon)$. Moreover, we have
\begin{multline*}
    \int_{A''_{k}(\varepsilon)}\int_{B''_{m}(\varepsilon)}a\Big(\frac{s}{\delta(\varepsilon)},\frac{t}{\delta(\varepsilon)}\Big)dtds=\int_{I''_{k}(\varepsilon)}\int_{I''_{m}(\varepsilon)}a\Big(\frac{s}{\delta(\varepsilon)},\frac{t}{\delta(\varepsilon)}\Big)dtds\\- \int_{(A''_{k}(\varepsilon))^{c}}\int_{B''_{m}(\varepsilon)}a\Big(\frac{s}{\delta(\varepsilon)},\frac{t}{\delta(\varepsilon)}\Big)dtds- \int_{A''_{k}(\varepsilon)}\int_{(B''_{m}(\varepsilon))^{c}}a\Big(\frac{s}{\delta(\varepsilon)},\frac{t}{\delta(\varepsilon)}\Big)dtds\\-\int_{(A''_{k}(\varepsilon))^{c}}\int_{(B''_{m}(\varepsilon))^{c}}a\Big(\frac{s}{\delta(\varepsilon)},\frac{t}{\delta(\varepsilon)}\Big)dtds.
\end{multline*}
Since \(k\in\widetilde{P}''_1(\varepsilon)\) and
\(m\in\widetilde{P}''_2(\varepsilon)\), each of the last three terms can be estimated from above by
\(2\tau\beta_a(\theta''(\varepsilon))^2\). Hence,
\begin{multline*}
    \frac{1}{(\theta''(\varepsilon))^{2}}\int_{A''_{k}(\varepsilon)}\int_{B''_{m}(\varepsilon)}a\Big(\frac{s}{\delta(\varepsilon)},\frac{t}{\delta(\varepsilon)}\Big)dtds
    \\
    \ge\frac{1}{(\theta''(\varepsilon))^{2}}\int_{I''_{k}(\varepsilon)}\int_{I''_{m}(\varepsilon)}a\Big(\frac{s}{\delta(\varepsilon)},\frac{t}{\delta(\varepsilon)}\Big)dtds-6\tau\beta_{a}
    \\
    =\frac{\delta^{2}(\varepsilon)}{(\theta''(\varepsilon))^{2}}\int_{\frac{k(\theta''(\varepsilon))}{\delta(\varepsilon)}}^{\frac{(k+1)(\theta''(\varepsilon))}{\delta(\varepsilon)}}\int_{\frac{m(\theta''(\varepsilon))}{\delta(\varepsilon)}}^{\frac{(m+1)(\theta''(\varepsilon))}{\delta(\varepsilon)}}a\Big(s,t\Big)dtds-6\tau\beta_{a}
    \\
    =\frac{1}{N^2(\theta''(\e))}\int_{1}^{N(\theta''(\e))}\int_{1}^{N(\theta''(\e))}a\Big(s,t\Big)dtds-6\tau\beta_{a}
    =\bar{a}-6\tau\beta_{a}.
\end{multline*} 
Combining the previous estimate with the above lower bound, we obtain
\begin{multline*}
    \liminf_{\e\to0}\frac{1}{\left|\log{\varepsilon}\right|}\int_{I''(\e)}\int_{I''(\e)}a\Big(\frac{s}{\delta(\e) },\frac{t}{\delta(\e) }\Big)\frac{\left|u_{\varepsilon}(s)-u_{\varepsilon}(t)\right|^2}{\left|s-t\right|^2}ds\,dt\\\ge\liminf_{\e\to0}\frac{1}{\left|\log{\varepsilon}\right|}\sum_{k\in\widetilde{P}''_{1}(\varepsilon)}\sum_{m\in\widetilde{P}''_{2}(\varepsilon)}\frac{8(1-\tau)^{2}(\bar{a}-6\beta_{a}\tau)}{(|m-k|+1)^{2}}.
\end{multline*}

We claim that
\begin{equation}\label{harmonic sum l.b}
    \liminf_{\varepsilon\to0}\frac{1}{\left|\log{\varepsilon}\right|}\sum_{k\in\widetilde{P}''_{1}(\varepsilon)}\sum_{m\in\widetilde{P}''_{2}(\varepsilon)}\frac{1}{(|m-k|+1)^2}\ge\theta''.
\end{equation}
To prove \eqref{harmonic sum l.b}, we make use of the following discrete rearrangement lemma for nonincreasing kernels. This result allows us to conclude the proof of \eqref{liminf average term}. The lemma shows that, among all configurations of two disjoint subsets with prescribed cardinalities, the minimal interaction is achieved when the sets are arranged in opposite extremal positions.
 \begin{lemma}\label{rearrangement lemma}
Let $\phi:(0,+\infty)\to(0,+\infty)$ be a nonincreasing function and $N_{1},\,N_{2}\in\mathbb{Z}$. Then, we have 
\begin{equation}\label{rearrangement inequality}
    \sum_{k\in P_{1}}\sum_{m\in P_{2}}\phi(|k-m|)\ge\sum_{N_{1}}^{k=k_{0}}\sum_{m=m_{0}}^{N_{2}}\phi(|k-m|)
\end{equation}
where $P_{1},\,P_{2}\subset\{N_{1},...,N_{2}\}$ are disjoint sets, $k_{0}=N_{1}+(\#P_{1}-1)$ and $m_{0}=N_{2}-(\#P_{2}-1)$.
 \end{lemma}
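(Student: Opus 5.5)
We prove the rearrangement Lemma by a discrete pushing argument. Moving points of $P_1$ to the left and points of $P_2$ to the right can only increase pairwise distances, and since $\phi$ is nonincreasing this can only decrease the sum.

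Set $p_1=\#P_1$, $p_2=\#P_2$. Since $P_1$ and $P_2$ are disjoint subsets of $\{N_1,\dots,N_2\}$, we have $p_1+p_2\le N_2-N_1+1$. Hence the target sets $Q_1=\{N_1,\dots,k_0\}$ and $Q_2=\{m_0,\dots,N_2\}$ are disjoint, every element of $Q_1$ is strictly smaller than every element of $Q_2$, and all arguments of $\phi$ on the right-hand side are positive.

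\textbf{Step 1: monotone matching.} List the elements of $P_1$ increasingly as $k_1<\dots<k_{p_1}$ and those of $P_2$ decreasingly as $m_1>\dots>m_{p_2}$. Pair them with the points of $Q_1$ and $Q_2$ by $k_i\mapsto q_i:=N_1+i-1$ and $m_j\mapsto r_j:=N_2-j+1$. Since $k_i$ is the $i$-th smallest element of a subset of $\{N_1,\dots\}$, we get $k_i\ge N_1+i-1=q_i$. Symmetrically, $m_j\le r_j$.

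\textbf{Step 2: termwise comparison.} The main point is to show that $|k_i-m_j|\le r_j-q_i$ for every pair $(i,j)$.
\begin{itemize}
\item If $k_i<m_j$, then $|k_i-m_j|=m_j-k_i\le r_j-q_i$ directly by Step 1.
\item If $k_i>m_j$, a crude bound is not enough, so we count. The $i$ elements $k_1,\dots,k_i$ and the $j$ elements $m_j,\dots,m_1$ are $i+j$ distinct integers (by disjointness) lying in $\{N_1,\dots,N_2\}$. Hence $i+j\le N_2-N_1+1$, that is, $r_j-q_i=N_2-N_1-i-j+2\ge 1$. Moreover, the integers $k_1<\dots<k_i$ and $m_j<\dots<m_1$ all lie in $[\min(k_1,m_j),\max(k_i,m_1)]$, an interval of length at most $N_2-N_1$. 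The elements of $P_1\cup P_2$ lying in $[m_j,k_i]$ must avoid the $k_l$ below $m_j$ and the $m_l$ above $k_i$. Counting these gives $k_i-m_j\le (N_2-N_1)-(i-1)-(j-1)=r_j-q_i$.
\end{itemize}
Since $\phi$ is nonincreasing and both quantities are positive, it follows that $\phi(|k_i-m_j|)\ge\phi(r_j-q_i)$.

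\textbf{Step 3: summation.} Summing over all pairs $(i,j)$ gives
$$\sum_{k\in P_1}\sum_{m\in P_2}\phi(|k-m|)\ge \sum_{k=N_1}^{k_0}\sum_{m=m_0}^{N_2}\phi(|k-m|),$$
which is the claim, since $\{q_i\}=Q_1$ and $\{r_j\}=Q_2$.

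The main obstacle is the case $k_i>m_j$ in Step 2, where the interleaved configuration must be controlled by the counting argument. If that counting becomes delicate, we would instead use an exchange induction. Repeatedly swap an adjacent pair $m<k$ with $m\in P_2$, $k\in P_1$, or move a point of $P_1$ left (resp. $P_2$ right) into an empty site. Check that each move does not increase the sum: for a swap, only distances involving the two swapped points change, and these change as a permutation or increase. The process terminates at the extremal configuration $(Q_1,Q_2)$.
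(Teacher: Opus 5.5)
Your Step 2 fails in the case $k_i>m_j$. For the fixed monotone matching $k_i\mapsto q_i$, $m_j\mapsto r_j$, the inequality $|k_i-m_j|\le r_j-q_i$ is false in general. Take $N_1=1$, $N_2=4$, $P_1=\{3,4\}$, $P_2=\{1,2\}$. Then $k_2=4$, $m_2=1$, $q_2=2$, $r_2=3$, so $|k_2-m_2|=3>1=r_2-q_2$, even though the lemma holds with equality for this configuration. Your counting only shows that the $k_l$ ($l<i$) above $m_j$ and the $m_l$ ($l<j$) below $k_i$ fit inside $(m_j,k_i)$. That bounds $k_i-m_j$ from \emph{below}. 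The upper bound $k_i-m_j\le (N_2-N_1)-(i-1)-(j-1)$ would require every $k_l$, $l<i$, to lie below $m_j$ and every $m_l$, $l<j$, to lie above $k_i$. This is exactly what fails when the sets interleave. Reflecting the matching does not rescue it: for $P_1=\{1,4\}$, $P_2=\{2,3\}$ in $\{1,\dots,4\}$, your matching fails on the pair $(4,2)$ and its mirror image fails on the pair $(1,3)$. Your fallback is also incorrect as stated. An adjacent swap can increase the sum: for $P_2=\{1,2\}$, $P_1=\{3\}$, swapping $2$ and $3$ turns $\phi(1)+\phi(2)$ into $2\phi(1)$. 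And moving a point of $P_1$ leftward shortens its distances to points of $P_2$ on its left, so that move is not monotone either.

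What is missing is an argument that absorbs the interleaving one point at a time. The paper does this by induction on $\#(P_1\cup P_2)$. Let $\bar k$ be the smallest element of $P_1\cup P_2$. Exchanging the labels of $P_1$ and $P_2$ leaves the left-hand side unchanged, and also the right-hand side (apply $x\mapsto N_1+N_2-x$), so you may assume $\bar k\in P_1$. The induction hypothesis, applied to $P_1\setminus\{\bar k\}$ and $P_2$ as subsets of $\{N_1+1,\dots,N_2\}$, gives the double sum over $N_1+1\le k\le k_0$, $m_0\le m\le N_2$. The remaining term satisfies $\sum_{m\in P_2}\phi(m-\bar k)\ge\sum_{m=m_0}^{N_2}\phi(m-N_1)$: the $\#P_2$ distinct integers $m-\bar k$ lie in $\{1,\dots,N_2-N_1\}$, so the $j$-th largest is at most $N_2-N_1-j+1$. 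The correspondence between pairs implicit in this induction is built adaptively, through the reflection whenever the leftmost remaining point belongs to $P_2$. In general it is not the fixed product matching of your Step 1. Your Step 1 and the case $k_i<m_j$ of Step 2 are correct, and they do settle the non-interleaved situation where every point of $P_1$ lies to the left of every point of $P_2$.
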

 \begin{proof}
We will prove this by induction on $n=\#(P_{1}\cup P_{2})$. Suppose $n=\#(P_{1}\cup P_{2})=2$, then there is nothing to prove. Suppose that the claim is true for $n=\#(P_{1}\cup P_{2})-1$, then we will prove it for $n=\#(P_{1}\cup P_{2})$. Let $\bar{k}\in P_{1}\cup P_{2}$ be a minimal element. Since inequality \eqref{rearrangement inequality} is invariant under interchanging the sets $P_{1}$ and $P_{2}$, we may assume without loss of generality that $\bar{k}\in P_{1}$. Then, we have
    
\begin{align*}
\sum_{k\in P_{1}}\sum_{m\in P_{2}}\phi(|k-m|)=\sum_{k\in P_{1}\setminus\{\bar{k}\}}\sum_{m\in P_{2}}\phi(|k-m|)+\sum_{m\in P_{2}}\phi(|\bar{k}-m|)\\\ge\sum_{N_{1}+1}^{k=k_{0}}\sum_{m=m_{0}}^{N_{2}}\phi(|k-m|)+\sum_{m=m_{0}}^{N_{2}}\phi(|N_{1}-m|)=\sum_{N_{1}}^{k=k_{0}}\sum_{m=m_{0}}^{N_{2}}\phi(|k-m|). 
\end{align*}
as claimed.
 \end{proof}
 
We know $\widetilde{P}''_{1}(\varepsilon),\,\widetilde{P}''_{2}(\varepsilon)$ are disjoint and both are contained in the set of indices $\{-n''(\varepsilon),...,n''(\varepsilon)-1\}$. Then, if we apply Lemma \ref{rearrangement lemma} with $\phi(t)=\frac{1}{(t+1)^{2}}$
\[
\sum_{k\in\widetilde{P}''_{1}(\varepsilon)}\sum_{m\in\widetilde{P}''_{2}(\varepsilon)}\frac{1}{(|m-k|+1)^2}\ge\sum_{k=-n''(\varepsilon)}^{k_{0}}\sum_{m=m_{0}}^{n''(\varepsilon)-1}\frac{1}{(|m-k|+1)^2}
\]
where $k_{0}=\#\widetilde{P}''_{1}(\varepsilon)-n''(\varepsilon)-1$ and $m_{0}=n''(\varepsilon)-\#\widetilde{P}''_{2}(\varepsilon)$. We also have that $\#(P''_{3}(\e)\cup\bar{P}''_{3}(\varepsilon))$ is uniformly bounded independently $\e$ and $\tau$. This implies $\#P''_{0}(\varepsilon)\le N''$ for some $N''\ge3$. In particular, we have $m_{0}-k_{0}\le2N''-1$. Consequently, 
\begin{multline*}
    \sum_{k\in\widetilde{P}''_{1}(\varepsilon)}\sum_{m\in\widetilde{P}''_{2}(\varepsilon)}\frac{1}{(|m-k|+1)^2}\ge\sum_{k=-n''(\varepsilon)}^{k_{0}}\sum_{m=k_{0}+2N''}^{n''(\varepsilon)-1}\frac{1}{(|m-k|+1)^2}\\=\sum_{k=-N_{1}(\varepsilon)}^{-N''}\sum_{N''}^{N_{2}(\varepsilon)}\frac{1}{(|m-k|+1)^2}=\sum_{k=N''}^{N_{1}(\varepsilon)}\sum_{m=N''}^{N_{2}(\varepsilon)}\frac{1}{(m+k+1)^2},
\end{multline*}
where $N_{1}(\varepsilon):=\#\widetilde{P}''_{1}(\varepsilon)+N''-1$ and $N_{2}(\varepsilon):=2n''(\varepsilon)-\#\widetilde{P}''_{1}(\varepsilon)-N''$.
\begin{proposition}\label{harmonic sum}
The following hold:\\
{\rm(i)} $$\frac{1-4\tau}{1-2\tau}\le\liminf_{\varepsilon\to0}\frac{\#\widetilde{P}''_{1}(\varepsilon)}{n''(\varepsilon)}\le\limsup_{\varepsilon\to0}\frac{\#\widetilde{P}''_{1}(\varepsilon)}{n''(\varepsilon)}\le\frac{1}{1-2\tau},$$
\\
{\rm(ii)}
$$\liminf_{\varepsilon\to0}\frac{1}{\left|\log{\varepsilon}\right|}\sum_{k=N''}^{N_{1}(\varepsilon)}\sum_{m=N''}^{N_{2}(\varepsilon)}\frac{1}{(m+k+1)^2}\ge\theta''.$$
\end{proposition}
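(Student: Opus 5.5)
Part (i) comes from counting intervals in the $\theta''(\varepsilon)$-partition of $(t_1-\rho,t_1+\rho)$. The partition has $2n''(\varepsilon)$ full intervals, with $n''(\varepsilon)=\lfloor\rho/\theta''(\varepsilon)\rfloor$. By property (a), once $\varepsilon$ is small every index lies in $P''_1(\varepsilon)\cup P''_2(\varepsilon)\cup P''_3(\varepsilon)$. By property (b), $\#P''_0(\varepsilon)$ stays bounded, so it is $o(n''(\varepsilon))$. The measure fact proved in Step 1 applies at any scale:
\[
|\{t\in(t_1-\rho,t_1+\rho):u_\varepsilon(t)\le\tau-1\}|\to\rho .
\]
For the lower bound, each interval of $P''_1$ contributes at most $\theta''(\varepsilon)$ to this measure. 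Each interval of $P''_2$ contributes at most $2\tau\theta''(\varepsilon)$, since there $b_k\ge(1-2\tau)\theta''$. Boundedly many other intervals and the two end pieces contribute $o(1)$. This gives
\[
\rho\le \theta''(\varepsilon)\,\#P''_1+2\tau\theta''(\varepsilon)\,(2n''(\varepsilon)-\#P''_1)+o(1).
\]
Since $\theta''(\varepsilon)n''(\varepsilon)\to\rho$, dividing yields $\liminf \#\widetilde P''_1/n''\ge (1-4\tau)/(1-2\tau)$. For the upper bound, each $P''_1$ interval contributes at least $(1-2\tau)\theta''(\varepsilon)$, so $(1-2\tau)\theta''(\varepsilon)\#P''_1\le\rho+o(1)$, which gives $\limsup\le 1/(1-2\tau)$. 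The same argument applies to $\widetilde P''_2$. As a consequence, $N_1(\varepsilon)$ and $N_2(\varepsilon)$ are both comparable to $n''(\varepsilon)$: they are bounded below by $c(\tau)n''(\varepsilon)$ with $c(\tau)>0$ for small $\tau$, since $N_2=2n''-\#\widetilde P''_1-N''\ge (2-\frac1{1-2\tau})n''-O(1)$.

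Part (ii) is then an explicit asymptotic computation. Set $M(\varepsilon)=\min\{N_1(\varepsilon),N_2(\varepsilon)\}\ge c\,n''(\varepsilon)$. Restrict the double sum to $N''\le k,m\le M$ and compare it with the integral
\[
\int_{N''}^{M}\int_{N''}^{M}\frac{dx\,dy}{(x+y+2)^2}.
\]
The comparison is valid up to $O(1)$ error, because the summand is monotone in each variable. Integrating in $y$ gives $\int_{N''}^{M}\big(\frac1{x+N''+2}-\frac1{x+M+2}\big)dx=\log M-O(1)$. Hence the sum is at least $\log n''(\varepsilon)-C(\tau,N'')$.

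Finally, $\log n''(\varepsilon)=\log\rho-\log\theta''(\varepsilon)+o(1)$. By \eqref{theta-eps}, $\log\theta''(\varepsilon)/\log\varepsilon\to\theta''$, so $-\log\theta''(\varepsilon)/|\log\varepsilon|\to\theta''$. This still holds for the modified choice $\theta''(\varepsilon)=\lfloor\varepsilon^{\theta''}/\delta\rfloor\delta$: because $\theta''<\lambda$, we have $\varepsilon^{\theta''}/\delta\to\infty$, so the rounding changes $\theta''(\varepsilon)$ only by a factor tending to $1$. Dividing by $|\log\varepsilon|$ and taking the $\liminf$ gives (ii).

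The main obstacle is making sure that $M(\varepsilon)$ is genuinely of order $n''(\varepsilon)$ and not merely large. Only a bound $M\ge c\,n''$ keeps the loss at $O(1)$ in the logarithm; a weaker bound such as $M\ge (n'')^{\gamma}$ would only give $\gamma\theta''$. This is exactly why part (i) is stated first. With the bounds of part (i) and $\tau<1/8$, both ratios $N_1/n''$ and $N_2/n''$ are bounded below by a positive constant, so $\log M=\log n''+O(1)$.
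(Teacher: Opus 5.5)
Your proof is correct and follows essentially the same route as the paper. The upper bound in (i) is the same sublevel-set measure count. Part (ii), as in the paper, is a $\log n''(\varepsilon)-O(1)$ lower bound that relies on (i) to make both $N_1(\varepsilon)$ and $N_2(\varepsilon)$ comparable to $n''(\varepsilon)$.

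The differences are only in execution. For the lower bound in (i), you distribute $|\{u_\varepsilon\le\tau-1\}|$ over the $P''_1$ and $P''_2$ intervals, whereas the paper uses $\#(P''_1\cup P''_2)/n''(\varepsilon)\to2$ together with the upper bound on $\#P''_2$. In (ii), you compare the sum over the square $[N'',M(\varepsilon)]^2$ with an integral, whereas the paper telescopes $\frac{1}{(m+k+1)(m+k+2)}$ and bounds the remainder by $N_1(\varepsilon)/N_2(\varepsilon)$.
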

\begin{proof}
(i) We know that $\#(P''_{1}(\varepsilon)\setminus \widetilde{P}''_{1}(\varepsilon))\leq N''$, so in {\rm(i)} we can replace $\#\widetilde{P}''_{1}(\varepsilon)$ with $\#P''_{1}(\varepsilon).$ 
First, we claim that 
\[
\limsup_{\varepsilon\to0}\frac{\#P''_{1}(\varepsilon)}{n''(\varepsilon)}\le\frac{1}{1-2\tau},\qquad\limsup_{\varepsilon\to0}\frac{\#P''_{2}(\varepsilon)}{n''(\varepsilon)}\le\frac{1}{1-2\tau}.
\]
We prove the first inequality, as the second follows by the same argument. 
Observe that 
\[
\Big|\bigcup_{k\in P''_{1}(\varepsilon)}A''_{k}(\varepsilon)\Big|\le|\{t\in(t_{1}-\rho,t_{1}+\rho);\,u_{\varepsilon}(t)\le\tau-1\}|\to\rho\,\,\text{as}\,\varepsilon\to0,
\]
and
$\displaystyle\Big|\bigcup_{k\in P''_{1}(\varepsilon)}A''_{k}(\varepsilon)\Big|\ge(1-2\tau)\#P''_{1}(\varepsilon)\theta''(\e)$.
Hence, we obtain
\[
\limsup_{\varepsilon\to0}\frac{\#P''_{1}(\varepsilon)\theta''(\e)}{\rho}=\limsup_{\varepsilon\to0}\frac{\#P''_{1}(\varepsilon)}{n''(\varepsilon)}\le\frac{1}{1-2\tau}.
\]

Similarly, one can prove that
\[
\limsup_{\varepsilon\to0}\frac{\#P''_{2}(\varepsilon)\theta''(\e)}{\rho}=\limsup_{\varepsilon\to0}\frac{\#P''_{2}(\varepsilon)}{n''(\varepsilon)}\le\frac{1}{1-2\tau}.
\]
To complete the proof, we observe that $\lim_{\varepsilon\to0}\#(P''_{1}(\varepsilon)\cup P''_{2}(\varepsilon))/n''(\varepsilon)=2
$. It follows that
\[
\liminf_{\varepsilon\to0}\frac{\#P''_{1}(\varepsilon)}{n''(\varepsilon)}\ge2-\limsup_{\varepsilon\to0}\frac{\#P''_{2}(\varepsilon)}{n''(\varepsilon)}\ge\frac{1-4\tau}{1-2\tau}.
\]
which completes the proof of {\rm(i)}.

\smallskip
{\rm(ii)}. We now prove part {\rm(ii)} using part {\rm(i)}.
\begin{multline*}
    \sum_{k=N''}^{N_{1}(\varepsilon)}\sum_{m=N''}^{N_{2}(\varepsilon)}\frac{1}{(m+k+1)^2}\ge\sum_{k=N''}^{N_{1}(\varepsilon)}\sum_{m=N''}^{N_{2}(\varepsilon)}\frac{1}{(m+k+1)(m+k+2)}\\\ge\sum_{k=N''}^{N_{1}(\varepsilon)}(\frac{1}{k+N''+1}-\frac{1}{k+N_{2}(\varepsilon)+2})\ge-\Big(\frac{N_{1}(\varepsilon)}{N_{2}(\varepsilon)}+2N''\Big)+\sum_{k=1}^{N_{1}(\varepsilon)+N''+1}\frac{1}{k}.
\end{multline*}
As a result, we get
\begin{align*}
    \liminf_{\varepsilon\to0}\frac{1}{\left|\log{\varepsilon}\right|}\sum_{k=N''}^{N_{1}(\varepsilon)}\sum_{m=N''}^{N_{2}(\varepsilon)}\frac{1}{(m+k+1)^2}\ge\liminf_{\varepsilon\to0}\frac{1}{\left|\log{\varepsilon}\right|}\sum_{k=1}^{N_{1}(\varepsilon)+N''+1}\frac{1}{k}\\-\limsup_{\e\to0}\frac{1}{\left|\log{\varepsilon}\right|}\Big(\frac{N_{1}(\varepsilon)}{N_{2}(\varepsilon)}+2N''\Big).
\end{align*}
One can check that the second term tends to $0$. Then by {\rm(i)} 
\begin{multline*}
    \liminf_{\varepsilon\to0}\frac{1}{\left|\log{\varepsilon}\right|}\sum_{k=N''}^{N_{1}(\varepsilon)}\sum_{m=N''}^{N_{2}(\varepsilon)}\frac{1}{(m+k+1)^2}\ge\liminf_{\varepsilon\to0}\frac{1}{\left|\log{\varepsilon}\right|}\sum_{k=1}^{N_{1}(\varepsilon)+N''+1}\frac{1}{k}\\=\liminf_{\varepsilon\to0}\frac{\log{(\#\widetilde{P}''_{1}(\e)}/n''(\e))+\log{n''(\e)}}{\left|\log{\varepsilon}\right|}=\lim_{\varepsilon\to0}\frac{\log{n''(\varepsilon)}}{\left|\log{\varepsilon}\right|}=\theta'',
\end{multline*}
which completes the proof of \eqref{harmonic sum l.b}, from which we have \eqref{second-lb}. 

This completes the proof of the liminf inequality.
\end{proof}
\subsection{Proof of the limsup inequality}
This section is devoted to the proof of Theorem \ref{sub-main}(ii), namely the construction of a recovery sequence for the limsup inequality. The recovery sequence is constructed locally near each jump point of the limiting function, while it coincides with the original function away from the interfaces. We show that each transition layer contributes a fixed amount to the limit energy, and the result follows by summing these contributions over all jump points.

\smallskip
 Let $u\in\BV\left((0,1);\{-1,1\}\right)$ and let $\{t_{n}: n\in\{1,\ldots,\#S(u)
\}\}$ be the collection of jump points of $u$. Let $\rho>0$ be such that $\#\{(t_n-2\rho,t_n+2\rho)\cap\{t_m: 1\le m\le \#S(u)\}=1$ and $(t_n-2\rho,t_n+2\rho)
 \subset(0,1)$ for every $n\in\{1,...,\#S(u)\}$.  We fix a jump point $t_{n}$, without loss of generality we may assume $n=1$ and that the restriction of $u$ to $(t_{1}-\rho,t_{1}+\rho)$ is given by
\begin{equation*}
    u|_{(t_{1}-\rho,t_{1}+\rho)}(t)=
    \begin{cases}
     -1 & t\in(t_{1}-\rho,t_{1})\\
     1 & t\in(t_{1},t_{1}+\rho).
    \end{cases}
\end{equation*}
Otherwise, we replace $u$ with $-u$.  We now proceed with the construction of the recovery sequences in the two cases separately. 

\medskip
\noindent \textbf{Case 1.} Suppose $\lambda_{0}\in[0,1)$ and thus $\lambda=\lambda_{0}$. We then consider the following sequence of functions 
\begin{equation*}
    u^{1}_{\epsilon}(t):=
\begin{cases}
    -1 &t\in(t_{1}-\rho,t_{1}(\varepsilon)-\epsilon)\\
    \frac{t-t_{1}(\varepsilon)}{\epsilon}  &t\in[t_{1}(\varepsilon)-\epsilon,t_{1}(\varepsilon)+\epsilon]\\
    1 &t\in(t_{1}(\varepsilon)+\epsilon,t_{1}+\rho),
\end{cases}
\end{equation*}
where $t_{1}(\varepsilon)$ is given by $t_{1}(\varepsilon)=\delta(\varepsilon)\bar{t}+\delta(\varepsilon)\lfloor\frac{t_{1}}{\delta(\varepsilon)}\rfloor$ with $\bar{t}\in\{s:s=\arg\min_{t\in[0,1]} a(t,t)\}$. We claim that 
\begin{equation*}
    \lim_{\epsilon\to 0}F_{\varepsilon}(u^{1}_{\varepsilon},(t_{1}-\rho,t_{1}+\rho))=8\left((1-\lambda)a_{\min}+\lambda\bar{a}\right).
\end{equation*}

Since $W(-1)=W(1)=0$, we obtain the following.
\begin{align*}
    F_{\varepsilon}(u^{1}_{\varepsilon},(t_{1}-\rho,t_{1}+\rho))=\frac{1}{\varepsilon|\log{\varepsilon}|}\int_{t_{1}(\varepsilon)-\varepsilon}^{t_{1}(\varepsilon)+\varepsilon}W(u_{\epsilon}(t))dt\\+\frac{1}{|\log\varepsilon|}\int_{t_{1}-\rho}^{t_{1}+\rho}\int_{t_{1}-\rho}^{t_{1}+\rho}a\Big(\frac{s}{\delta(\varepsilon)},\frac{t}{\delta(\varepsilon)}\Big)\frac{\left|u^{1}_{\epsilon}(s)-u^{1}_{\epsilon}(t)\right|^2}{\left|s-t\right|^2}ds\,dt.
\end{align*}
Then we get 
\begin{equation*}
    F_{\varepsilon}(u^{1}_{\varepsilon},(t_{1}-\rho,t_{1}+\rho))=E({\varepsilon})+o(1)\,\,\text{as}\,\,\varepsilon\to0,
\end{equation*}
where 
\begin{equation}
    E({\varepsilon}):=\frac{1}{|\log\varepsilon|}\int_{t_{1}-\rho}^{t_{1}+\rho}\int_{t_{1}-\rho}^{t_{1}+\rho}a\Big(\frac{s}{\delta(\varepsilon)},\frac{t}{\delta(\varepsilon)}\Big)\frac{\left|u^{1}_{\varepsilon}(s)-u^{1}_{\varepsilon}(t)\right|^2}{\left|s-t\right|^2}ds\,dt.
\end{equation}
Note that \(u_\varepsilon^1\) is constant on the intervals $(t_1-\rho,t_1(\varepsilon)-\e)$ and
$(t_1(\varepsilon)+\varepsilon,t_1+\rho)$.
Hence, if \(s\) and \(t\) belong to the same one of these intervals, then $|u_\varepsilon^1(s)-u_\varepsilon^1(t)|=0$. It follows that such pairs \((s,t)\) do not contribute to the energy \(E(\varepsilon)\). Therefore, we may write
$   E({\varepsilon})=E_{1}(\varepsilon)+E_{2}(\varepsilon)+E_{3}(\varepsilon)+E_{4}(\varepsilon)$,
with 
\begin{equation*}
    E_{1}(\varepsilon)=\frac{2}{|\log\varepsilon|}\int_{t_{1}-\rho}^{t_{1}(\varepsilon)-\varepsilon}\int_{t_{1}(\varepsilon)+\varepsilon}^{t_{1}+\rho}a\Big(\frac{s}{\delta(\varepsilon)},\frac{t}{\delta(\varepsilon)}\Big)\frac{\left|u^{1}_{\varepsilon}(s)-u^{1}_{\varepsilon}(t)\right|^2}{\left|s-t\right|^2}ds\,dt,
\end{equation*}
\begin{equation*}
    E_{2}(\varepsilon)=\frac{1}{|\log\varepsilon|}\int_{t_{1}(\varepsilon)-\varepsilon}^{t_{1}(\varepsilon)+\varepsilon}\int_{t_{1}(\varepsilon)-\varepsilon}^{t_{1}(\varepsilon)+\varepsilon}a\left(\frac{s}{\delta(\varepsilon)},\frac{t}{\delta(\varepsilon)}\right)\frac{\left|u^{1}_{\varepsilon}(s)-u^{1}_{\varepsilon}(t)\right|^2}{\left|s-t\right|^2}ds\,dt,
\end{equation*}
\begin{equation*}
     E_{3}(\varepsilon)=\frac{2}{|\log\varepsilon|}\int_{t_{1}-\rho}^{t_{1}(\varepsilon)-\varepsilon}\int_{t_{1}(\varepsilon)-\varepsilon}^{t_{1}(\varepsilon)+\varepsilon}a\left(\frac{s}{\delta(\varepsilon)},\frac{t}{\delta(\varepsilon)}\right)\frac{\left|u^{1}_{\varepsilon}(s)-u^{1}_{\varepsilon}(t)\right|^2}{\left|s-t\right|^2}ds\,dt,
\end{equation*}
\begin{equation*}
     E_{4}(\varepsilon)=\frac{2}{|\log\varepsilon|}\int_{t_{1}(\varepsilon)-\varepsilon}^{t_{1}(\varepsilon)+\varepsilon}\int_{t_{1}(\varepsilon)+\varepsilon}^{t_{1}+\rho}a\left(\frac{s}{\delta(\varepsilon)},\frac{t}{\delta(\varepsilon)}\right)\frac{\left|u^{1}_{\varepsilon}(s)-u^{1}_{\varepsilon}(t)\right|^2}{\left|s-t\right|^2}ds\,dt.
\end{equation*}
We claim that each of the error terms $E_i(\varepsilon)$, for $i=2,3,4$, converges to zero as $\varepsilon \to 0$, that is, $\lim_{\varepsilon \to 0} E_i(\varepsilon) = 0$. 

We estimate $ E_{2}(\varepsilon)$. This term corresponds to the interaction within the $\varepsilon$-neighborhood of $t_1(\varepsilon)$. Using the boundedness of $a$, we obtain
\begin{equation*}
    E_{2}(\varepsilon)=\frac{1}{|\log\varepsilon|}\int_{t_{1}(\varepsilon)-\varepsilon}^{t_{1}(\varepsilon)+\varepsilon}\int_{t_{1}(\varepsilon)-\varepsilon}^{t_{1}(\varepsilon)+\varepsilon}a\Big(\frac{s}{\delta(\varepsilon)},\frac{t}{\delta(\varepsilon)}\Big)\frac{\left|u^{1}_{\varepsilon}(s)-u^{1}_{\varepsilon}(t)\right|^2}{\left|s-t\right|^2}ds\,dt\leq\frac{4\beta_{a}}{|\log\varepsilon|}
\end{equation*}

We estimate $ E_{3}(\varepsilon)$. This term describes the interaction between the left region $(t_1-\rho,\, t_1(\varepsilon)-\varepsilon)$ and the $\varepsilon$-neighborhood of $t_1(\varepsilon)$. Using the boundedness of $a$ and proceeding as in the previous estimate, we obtain the following.
\begin{multline*}
    E_{3}(\varepsilon)\leq
    \frac{2\beta_{a}}{|\log\varepsilon|}\int_{t_{1}-\rho}^{t_{1}(\varepsilon)-\varepsilon}\int_{t_{1}(\varepsilon)-\varepsilon}^{t_{1}(\varepsilon)+\varepsilon}\frac{\left|u^{1}_{\varepsilon}(s)-u^{1}_{\varepsilon}(t)\right|^2}{\left|s-t\right|^2}ds\,dt\\=\frac{2\beta_{a}}{\varepsilon^2|\log\varepsilon|}\int_{t_{1}-\rho}^{t_{1}(\varepsilon)-\varepsilon}\int_{t_{1}(\varepsilon)-\varepsilon}^{t_{1}(\varepsilon)+\varepsilon}\frac{\left(s-(t_{1}(\varepsilon)-\varepsilon)\right)^2}{\left|s-t\right|^2}ds\,dt\\=\frac{2\beta_{a}}{\varepsilon^2|\log\varepsilon|}\int_{t_{1}(\varepsilon)-\varepsilon}^{t_{1}(\varepsilon)+\varepsilon}\left(s-(t_{1}(\varepsilon)-\varepsilon)\right)^2\frac{1}{s-t}\Big|_{t_{1}-\rho}^{t_{1}(\varepsilon)-\varepsilon}ds\\\leq\frac{2\beta_{a}}{\varepsilon^2|\log\varepsilon|}\int_{t_{1}(\varepsilon)-\varepsilon}^{t_{1}(\varepsilon)+\varepsilon}\left(s-(t_{1}(\varepsilon)-\varepsilon)\right)ds=\frac{4\beta_{a}}{|\log\varepsilon|}.
\end{multline*}

We estimate $E_{4}(\varepsilon)$. This term represents the interaction between the $\varepsilon$-neigh\-bor\-hood of $t_1(\varepsilon)$ and the right region $(t_1(\varepsilon)+\varepsilon,\, t_1+\rho)$. The argument is analogous to that for $E_3(\varepsilon)$, and we obtain
\begin{equation*}
    E_{4}(\varepsilon)\leq\frac{2\beta_{a}}{|\log\varepsilon|}\int_{t_{1}(\varepsilon)-\varepsilon}^{t_{1}(\varepsilon)+\varepsilon}\int_{t_{1}(\varepsilon)+\varepsilon}^{t_{1}+\rho}\frac{\left|u^{1}_{\varepsilon}(s)-u^{1}_{\varepsilon}(t)\right|^2}{\left|s-t\right|^2}ds\,dt\le\frac{4\beta_{a}}{|\log\varepsilon|}.
\end{equation*}
These estimates are sufficient to conclude the claim.

As for $E_1(\varepsilon)$, we write
$
  E_{1}(\varepsilon)=E_{1,1}(\varepsilon)+E_{1,2}(\varepsilon)+E_{1,3}(\varepsilon)+E_{1,4}(\varepsilon),  
$
 with
 \begin{equation*}
  E_{1,1}(\varepsilon)= \frac{2}{|\log\varepsilon|}\int_{t_{1}(\varepsilon)-\delta(\varepsilon)}^{t_{1}(\varepsilon)-\varepsilon}\int_{t_{1}(\varepsilon)+\varepsilon}^{t_{1}(\varepsilon)+\delta(\varepsilon)}a\Big(\frac{s}{\delta(\varepsilon)},\frac{t}{\delta(\varepsilon)}\Big)\frac{\left|u^{1}_{\varepsilon}(s)-u^{1}_{\varepsilon}(t)\right|^2}{\left|s-t\right|^2}ds\,dt  
 \end{equation*}\begin{equation*}
  E_{1,2}(\varepsilon)= \frac{2}{|\log\varepsilon|}\int_{t_{1}-\rho}^{t_{1}(\varepsilon)-\delta(\varepsilon)}\int_{t_{1}(\varepsilon)+\delta(\varepsilon)}^{t_{1}+\rho}a\Big(\frac{s}{\delta(\varepsilon)},\frac{t}{\delta(\varepsilon)}\Big)\frac{\left|u^{1}_{\varepsilon}(s)-u^{1}_{\varepsilon}(t)\right|^2}{\left|s-t\right|^2}ds\,dt   
 \end{equation*}
 \begin{equation*}
  E_{1,3}(\varepsilon)=\frac{2}{|\log\varepsilon|}\int_{t_{1}-\rho}^{t_{1}(\varepsilon)-\delta(\varepsilon)}\int_{t_{1}(\varepsilon)+\varepsilon}^{t_{1}(\varepsilon)+\delta(\varepsilon)}a\Big(\frac{s}{\delta(\varepsilon)},\frac{t}{\delta(\varepsilon)}\Big)\frac{\left|u^{1}_{\varepsilon}(s)-u^{1}_{\varepsilon}(t)\right|^2}{\left|s-t\right|^2}ds\,dt   
 \end{equation*}
 \begin{equation*}
  E_{1,4}(\varepsilon)=\frac{2}{|\log\varepsilon|}\int_{t_{1}(\varepsilon)-\delta(\varepsilon)}^{t_{1}(\varepsilon)-\varepsilon}\int_{t_{1}(\varepsilon)+\delta(\varepsilon)}^{t_{1}+\rho}a\Big(\frac{s}{\delta(\varepsilon)},\frac{t}{\delta(\varepsilon)}\Big)\frac{\left|u^{1}_{\varepsilon}(s)-u^{1}_{\varepsilon}(t)\right|^2}{\left|s-t\right|^2}ds\,dt.   
 \end{equation*}
 By arguments analogous to those used for $E_3(\varepsilon)$ and $E_4(\varepsilon)$, we obtain
 \[
 \lim_{\varepsilon\to0}E_{1,3}(\varepsilon)=\lim_{\varepsilon\to0}E_{1,4}(\varepsilon)=0.
 \] 
 We now show that 
 $\lim\limits_{\varepsilon\to0}E_{1,1}(\varepsilon)=8(1-\lambda) a_{\min}$ and $\lim\limits_{\varepsilon\to0}E_{1,2}(\varepsilon)=8\lambda\bar{a}$.
 
We compute the limit of the term $E_{1,1}(\varepsilon)$ as follows:
 \begin{multline*}
     E_{1,1}(\varepsilon)= \frac{2}{|\log\varepsilon|}\int_{t_{1}(\varepsilon)-\delta(\varepsilon)}^{t_{1}(\varepsilon)-\varepsilon}\int_{t_{1}(\varepsilon)+\varepsilon}^{t_{1}(\varepsilon)+\delta(\varepsilon)}a\Big(\frac{s}{\delta(\varepsilon)},\frac{t}{\delta(\varepsilon)}\Big)\frac{\left|u^{1}_{\varepsilon}(s)-u^{1}_{\varepsilon}(t)\right|^2}{\left|s-t\right|^2}ds\,dt\\
     =\frac{8}{|\log\varepsilon|}\int_{t_{1}(\varepsilon)-\delta(\varepsilon)}^{t_{1}(\varepsilon)-\varepsilon}\int_{t_{1}(\varepsilon)+\varepsilon}^{t_{1}(\varepsilon)+\delta(\varepsilon)}\frac{a\big(\frac{s}{\delta(\varepsilon)},\frac{t}{\delta(\varepsilon)}\big)}{(s-t)^2}ds\,dt
     \\=\frac{8}{|\log\varepsilon|}\int_{\frac{\varepsilon}{\delta(\varepsilon)}}^{1}\int_{\frac{\varepsilon}{\delta(\varepsilon)}}^{1}\frac{a(\bar{t}+s,\bar{t}-t)}{(s+t)^2}ds\,dt.
 \end{multline*} 
 Hence,
\begin{equation*}
    \lim_{\varepsilon\to0}E_{1,1}(\varepsilon)=\lim_{\varepsilon\to0}\frac{8}{|\log\varepsilon|}\int_{\frac{\varepsilon}{\delta(\varepsilon)}}^{1}\int_{\frac{\varepsilon}{\delta(\varepsilon)}}^{1}\frac{a\left(\bar{t}+s,\bar{t}-t\right)}{(s+t)^2}ds\,dt.
\end{equation*}

To compute the limit of  $E_{1,1}(\varepsilon)$, we will use the following lemma.
\begin{lemma}
    Let $\alpha$ be a continuous function on $\mathbb{R}^{2}$ and $\delta(\varepsilon)\to0$ as $\varepsilon\to0$ such that
    $\displaystyle\lim_{\varepsilon\to0}\frac{\log{\delta(\varepsilon)}}{\log{\varepsilon}}=\lambda_{0}\in[0,1)$.
    Then, we have 
\begin{equation*}
    \lim_{\varepsilon\to 0}\frac{1}{|\log\varepsilon|}\int_{\frac{\varepsilon}{\delta(\varepsilon)}}^{1}\int_{\frac{\varepsilon}{\delta(\varepsilon)}}^{1}\frac{\alpha(s,t)}{(s+t)^2}ds\,dt=(1-\lambda_{0}) \alpha(0,0).
\end{equation*}
\end{lemma}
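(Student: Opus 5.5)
We first treat the constant case $\alpha\equiv 1$, and then handle a general continuous $\alpha$ by using continuity at the origin. Throughout, set $\eta=\eta(\varepsilon):=\varepsilon/\delta(\varepsilon)$. Since $\log\delta/\log\varepsilon\to\lambda_0<1$, we have $\log\eta=\log\varepsilon-\log\delta=\log\varepsilon\,(1-\lambda_0+o(1))$. Consequently $\eta\to0$ and $|\log\eta|/|\log\varepsilon|\to1-\lambda_0$.

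\emph{Constant case.} Here the integral can be computed explicitly:
\[
\int_\eta^1\int_\eta^1\frac{ds\,dt}{(s+t)^2}=\int_\eta^1\Big(\frac1{t+\eta}-\frac1{t+1}\Big)dt=\log\frac{(1+\eta)^2}{4\eta}=|\log\eta|+O(1).
\]
Dividing by $|\log\varepsilon|$ gives the limit $1-\lambda_0$. This is the full content of the statement when $\alpha$ is constant.

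\emph{General $\alpha$.} Fix $r\in(0,1)$ and split $[\eta,1]^2$ into the region $Q_r=[\eta,r]^2$ and its complement. On the complement at least one of $s,t$ is $\ge r$, so $(s+t)^{-2}\le r^{-2}$, and $\alpha$ is bounded on $[0,1]^2$ by continuity. The contribution of the complement is therefore at most $\|\alpha\|_{L^\infty([0,1]^2)}r^{-2}$, which is independent of $\varepsilon$ and vanishes after division by $|\log\varepsilon|$. On $Q_r$ we write $\alpha(s,t)=\alpha(0,0)+(\alpha(s,t)-\alpha(0,0))$ and set $\omega(r):=\sup_{[0,r]^2}|\alpha-\alpha(0,0)|$. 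The explicit computation, applied on $[\eta,r]^2$ (equivalently, by rescaling, with $\eta$ replaced by $\eta/r$), gives
\[
\int_{Q_r}\frac{ds\,dt}{(s+t)^2}=|\log\eta|+O_r(1).
\]
It follows that
\[
\Big|\frac1{|\log\varepsilon|}\int_{Q_r}\frac{\alpha(s,t)}{(s+t)^2}\,ds\,dt-(1-\lambda_0)\alpha(0,0)\Big|\le \omega(r)(1-\lambda_0)+o(1).
\]
We first let $\varepsilon\to0$ and then $r\to0$. Since $\omega(r)\to0$ by continuity of $\alpha$ at the origin, this yields the claim.

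\emph{Main obstacle.} The only delicate point is to check that the logarithmic mass concentrates near the corner $(0,0)$, uniformly enough to justify replacing $\alpha$ by $\alpha(0,0)$. Both pieces of the argument above establish this: the complement of $Q_r$ contributes only $O_r(1)$, while the entire $|\log\eta|$ growth comes from $Q_r$, where $\alpha$ is within $\omega(r)$ of $\alpha(0,0)$. One remark concerns the application of the lemma. In $E_{1,1}$ the integrand is $a(\bar t+s,\bar t-t)$, so we apply the lemma with $\alpha(s,t):=a(\bar t+s,\bar t-t)$. This gives $\alpha(0,0)=a(\bar t,\bar t)=a_{\min}$ and hence $E_{1,1}(\varepsilon)\to 8(1-\lambda)a_{\min}$.
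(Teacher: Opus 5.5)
Your proof is correct and follows the same route as the paper. You cut $[\eta,1]^2$ at a fixed scale $r$ (the paper's $c$), observe that the remainder is $O_r(1)$ and disappears after dividing by $|\log\varepsilon|$, and use continuity of $\alpha$ at the origin on $[\eta,r]^2$, where $\int\!\!\int (s+t)^{-2}\sim|\log\eta|\sim(1-\lambda_0)|\log\varepsilon|$. Your version is slightly more careful in two respects: you compute this integral explicitly, and your $\omega(r)$ estimate is two-sided in absolute value, whereas the paper's min/max sandwich drops the remainder in the lower bound and so tacitly uses $\alpha\ge 0$ (harmless in the application, since $\alpha(s,t)=a(\bar t+s,\bar t-t)\ge\alpha_a>0$).
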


\begin{proof}
Since $\lambda_{0}\in[0,1)$ we have $\frac{\varepsilon}{\delta(\varepsilon)}\to0$ as $\varepsilon\to0$.
Let $c\in(\frac{\varepsilon}{\delta(\varepsilon)},1)$ be any fixed number. Then
  \begin{multline*}
\frac{\min_{s,t\in[\frac{\varepsilon}{\delta(\varepsilon)},c]}\alpha(s,t)}{|\log\varepsilon|}\int_{\frac{\varepsilon}{\delta(\varepsilon)}}^{c}\int_{\frac{\varepsilon}{\delta(\varepsilon)}}^{c}\frac{1}{(s+t)^2}dsdt\leq\frac{1}{|\log\varepsilon|}\int_{\frac{\varepsilon}{\delta(\varepsilon)}}^{1}\int_{\frac{\varepsilon}{\delta(\varepsilon)}}^{1}\frac{
      \alpha\left(s,t\right)}{(s+t)^2}dsdt\\\leq\frac{\max_{s,t\in[\frac{\varepsilon}{\delta(\varepsilon)},c]}\alpha(s,t)}{|\log\varepsilon|}\int_{\frac{\varepsilon}{\delta(\varepsilon)}}^{c}\int_{\frac{\varepsilon}{\delta(\varepsilon)}}^{c}\frac{1}{(s+t)^2}dsdt\\+\frac{2\max_{s,t\in[0,1]}\alpha(s,t)}{|\log\varepsilon|}\int_{c}^{1}\int_{\frac{\varepsilon}{\delta(\varepsilon)}}^{1}\frac{1}{(s+t)^2}dsdt.
  \end{multline*}
  
  If we pass to the limit on both sides as $\varepsilon\to 0$, we obtain
  \begin{equation*}
      (1-\lambda_{0})\min_{s,t\in[0,c]}\alpha(s,t)\leq\lim_{\varepsilon\to 0}\frac{1}{|\log\varepsilon|}\int_{\frac{\varepsilon}{\delta(\varepsilon)}}^{1}\int_{\frac{\varepsilon}{\delta(\varepsilon)}}^{1}\frac{\alpha\left(s,t\right)}{(s+t)^2}ds\,dt\leq(1-\lambda_{0})\max_{s,t\in[0,c]}\alpha(s,t),
  \end{equation*}
  since we have
  \[
  \lim_{\varepsilon\to0}\frac{1}{|\log\varepsilon|}\int_{\frac{\varepsilon}{\delta(\varepsilon)}}^{c}\int_{\frac{\varepsilon}{\delta(\varepsilon)}}^{c}\frac{1}{(s+t)^2}ds\,dt=1-\lambda_{0},\,\, \lim_{\e\to0} \frac{1}{|\log\varepsilon|}\int_{c}^{1}\int_{\frac{\varepsilon}{\delta(\varepsilon)}}^{1}\frac{1}{(s+t)^2}dsdt=0.
  \]
  Letting $c\to0$, we obtain the desired result.
\end{proof}

By the above lemma, applied with $\alpha(s,t)= a(\bar{t}+s,\bar{t}-t)$, we obtain
\begin{equation*}
    \lim_{\varepsilon\to0}E_{1,1}(\varepsilon)=8(1-\lambda_{0}) a(\bar{t},\bar{t})=8(1-\lambda_{0})a_{\min}
\end{equation*}

To compute the limit of $E_{1,2}(\varepsilon)$ we will use the following lemma.
\begin{lemma}\label{harmonic sum lemma}
    Let $\varepsilon>0$, and let $N(\varepsilon)\in\N$ be such that
    $\displaystyle\lim_{\varepsilon\to0}\frac{\log{N(\varepsilon)}}{|\log{\varepsilon}|}=\lambda$.
    Then we have
    \[
    \lim_{\varepsilon\to0}\frac{1}{|\log{\varepsilon}|}\int_{1}^{N(\varepsilon)}\int_{1}^{N(\varepsilon)}\frac{a(s+s_{0},t+t_{0})}{(s+t)^{2}}ds\,dt=\lambda\bar{a},
    \]
    where $s_{0}$ and $t_{0}$ may depend on $\varepsilon.$
\end{lemma}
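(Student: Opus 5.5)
The plan is to decompose the square $[1,N(\varepsilon)]^2$ into dyadic-type blocks on which the kernel $(s+t)^{-2}$ is almost constant. On each block, periodicity of $a$ replaces $a$ by its mean $\bar a$, up to an error small relative to the block's contribution. Summing the blocks reproduces $\int_1^N\int_1^N (s+t)^{-2}\,ds\,dt$. A direct computation gives
\[
\int_1^N\int_1^N\frac{ds\,dt}{(s+t)^2}=\log\frac{(N+1)^2}{4N}=\log N+O(1),
\]
so after dividing by $|\log\varepsilon|$ the limit is $\lambda\bar a$. Since the shifts $s_0,t_0$ only translate the integrand, and all estimates will use only periodicity and the bounds $\alpha_a\le a\le\beta_a$, the dependence of $s_0,t_0$ on $\varepsilon$ causes no difficulty.

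The key step is to fix a large integer $M$ and split $[1,N]$ into consecutive integer intervals $J_j$ of length $L_j$, with $L_j$ comparable to $s/M$ for $s\in J_j$ (geometric growth with ratio $1+1/M$, rounded to integers). This gives about $M\log N$ intervals. Consider a product block $J_j\times J_k$ with $s\in J_j$, $t\in J_k$.
\begin{itemize}
\item The lengths satisfy $L_j,L_k\le (s+t)/M$.
\item The kernel $(s+t)^{-2}$ varies on the block by a factor at most $(1+2/M)^2$.
\item Because $L_j,L_k$ are integers and $a$ is $1$-periodic in each variable, $\int_{J_j}\int_{J_k}a(s+s_0,t+t_0)\,ds\,dt=L_jL_k\bar a$ exactly.
\end{itemize}
Hence on each block
\[
\int_{J_j\times J_k}\frac{a(s+s_0,t+t_0)}{(s+t)^2}=\bar a\,(1+O(1/M))\int_{J_j\times J_k}\frac{ds\,dt}{(s+t)^2}.
\]
The rounding leaves at most one unit-size leftover interval near $N$, and very small $s$ (where $s/M<1$) must use unit intervals. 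In both cases the integrand is bounded by $\beta_a/(s+t)^2$, so these leftover pieces contribute $O(M)$ or $O(1)$, which is negligible after division by $|\log\varepsilon|$.

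Summing over all blocks gives
\[
\frac{1}{|\log\varepsilon|}\int_1^N\int_1^N \frac{a(s+s_0,t+t_0)}{(s+t)^2}\,ds\,dt=\bar a\,(1+O(1/M))\,\frac{\log N+O(M)}{|\log\varepsilon|}.
\]
Using $\log N(\varepsilon)/|\log\varepsilon|\to\lambda$, then letting $\varepsilon\to0$ and finally $M\to\infty$, yields the claim.

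The main obstacle is making the block decomposition precise near the region where $s$ and $t$ are both small. There the blocks cannot be taken of length $\le (s+t)/M$ while also having integer length. I would handle this by treating $[1,M]^2$ separately: it is bounded by $\beta_a\log M$, hence $o(|\log\varepsilon|)$. On the complement, $s+t\ge M$, so integer-length blocks of size about $(s+t)/M$ are admissible. This uses integer lengths only in one variable at a time, which suffices, because the mean over a product of integer-length intervals equals $\bar a$ exactly.
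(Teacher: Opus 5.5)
Your proof is correct, but it takes a more elaborate route than the paper's.

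\textbf{The paper's route.} The paper uses the finest decomposition compatible with periodicity. It splits $[1,N]^2$ into the unit squares $[k-1,k]\times[m-1,m]$, $2\le k,m\le N$. On each of these, $a(\cdot+s_0,\cdot+t_0)$ has mean exactly $\bar a$. The paper then squeezes the kernel between $(m+k)^{-2}$ and $(m+k-2)^{-2}$. Comparing with $\frac{1}{(m+k)(m+k+1)}$ and $\frac{1}{(m+k-2)(m+k-3)}$ makes both double sums telescope, which gives
\[
\bar a\Big(\sum_{k=1}^{N}\tfrac1k-4\Big)\le\int_1^N\int_1^N\frac{a(s+s_0,t+t_0)}{(s+t)^2}\,ds\,dt\le\bar a\sum_{k=1}^{N}\tfrac1k .
\]
This is an $O(1)$ error, uniform in $N$, $s_0$ and $t_0$, with no auxiliary parameter.

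\textbf{Why your ``main obstacle'' does not arise there.} You worry that blocks near $(1,1)$ cannot be both of integer length and small relative to $s+t$. The unit-square argument never needs the kernel to be almost constant in a relative sense. On a unit square with $s+t\approx r$, the relative oscillation of $(s+t)^{-2}$ is $O(1/r)$. There are about $r$ such squares, each of weight about $r^{-2}$, so the total absolute error is bounded by $\sum_r r\cdot r^{-2}\cdot r^{-1}<\infty$. Both the region near $(1,1)$ and the discretization error are therefore absorbed into a bounded constant.

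\textbf{Your route.} Your blocks at relative scale $1/M$ give a relative error $O(1/M)$ on the whole main term $\log N$, plus $O(\log M)$ from $[1,M]^2$. This forces the double limit ($\varepsilon\to0$, then $M\to\infty$) and only yields an $o(\log N)$ error. The trade-off is robustness: you use only exact averaging on integer-length blocks and the continuity and degree $-2$ homogeneity of the kernel, not the explicit telescoping structure of $(s+t)^{-2}$. The paper's argument is shorter and quantitatively sharper for this specific kernel.
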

\begin{proof}
    We estimate the integral from above and below as follows:
\begin{multline*}
  \sum_{k=2}^{N(\varepsilon)}\sum_{m=2}^{N(\varepsilon)}\frac{\bar{a}}{(m+k)(m+k+1)}\le\sum_{k=2}^{N(\varepsilon)}\sum_{m=2}^{N(\varepsilon)}\int_{k-1}^{k}\int_{m-1}^{m}\frac{a\left(s+s_{0},t+t_{0}\right)}{(s+t)^2}ds\,dt\\\le\sum_{k=2}^{N(\varepsilon)}\sum_{m=2}^{N(\varepsilon)}\frac{\bar{a}}{(m+k-2)(m+k-3)}.
\end{multline*}
Moreover, for $N(\varepsilon)\ge2$, we have 
\[
\sum_{k=2}^{N(\varepsilon)}\sum_{m=2}^{N(\varepsilon)}\frac{1}{(m+k)(m+k+1)}=\sum_{k=2}^{N(\varepsilon)}\left(\frac{1}{k+2}-\frac{1}{k+N(\varepsilon)+1}\right)\ge-4+\sum_{k=1}^{N(\varepsilon)}\frac{1}{k},
\]
and
\[
\sum_{k=2}^{N(\varepsilon)}\sum_{m=2}^{N(\varepsilon)}\frac{1}{(m+k-2)(m+k-3)}=\sum_{k=2}^{N(\varepsilon)}\left(\frac{1}{k}-\frac{1}{k+N(\varepsilon)-3}\right)\le\sum_{k=1}^{N(\varepsilon)}\frac{1}{k}.
\]
Therefore, we obtain
\[
-\frac{4\bar{a}}{|\log{\varepsilon}|}+\frac{\bar{a}}{|\log{\varepsilon}|}\sum_{k=1}^{N(\varepsilon)}\frac{1}{k}\le\frac{1}{|\log{\varepsilon}|}\int_{1}^{N(\varepsilon)}\int_{1}^{N(\varepsilon)}\frac{a(s+s_{0},t+t_{0})}{(s+t)^{2}}ds\,dt\le\frac{\bar{a}}{|\log{\varepsilon}|}\sum_{k=1}^{N(\varepsilon)}\frac{1}{k}.
\]
Using
$\sum\limits_{k=1}^{n}\frac{1}{k}\sim\log{n}$ as $n\to\infty$,
we obtain
\[
\lim_{\varepsilon\to0}\frac{1}{|\log{\varepsilon}|}\int_{1}^{N(\varepsilon)}\int_{1}^{N(\varepsilon)}\frac{a(s+s_{0},t+t_{0})}{(s+t)^{2}}ds\,dt=\bar{a} \lim_{\varepsilon\to0}\frac{\log{N(\varepsilon)}}{|\log{\varepsilon}|}=\bar{a}\lambda
\]
and the claim.
\end{proof}
We rewrite $E_{1,2}(\varepsilon)$ in the following form.
\begin{multline*}
   E_{1,2}(\varepsilon)= \frac{2}{|\log\varepsilon|}\int_{t_{1}-\rho}^{t_{1}(\varepsilon)-\delta(\varepsilon)}\int_{t_{1}(\varepsilon)+\delta(\varepsilon)}^{t_{1}+\rho}a\Big(\frac{s}{\delta(\varepsilon)},\frac{t}{\delta(\varepsilon)}\Big)\frac{\left|u^{1}_{\varepsilon}(s)-u^{1}_{\varepsilon}(t)\right|^2}{\left|s-t\right|^2}ds\,dt\\=\frac{8}{|\log\varepsilon|}\int_{t_{1}-\rho}^{t_{1}(\varepsilon)-\delta(\varepsilon)}\int_{t_{1}(\varepsilon)+\delta(\varepsilon)}^{t_{1}+\rho}\frac{a\big(\frac{s}{\delta(\varepsilon)},\frac{t}{\delta(\varepsilon)}\big)}{(s-t)^2}ds\,dt\\=\frac{8}{|\log\varepsilon|}\int_{1}^{\frac{\rho-(t_{1}-t_{1}(\varepsilon))}{\delta(\varepsilon)}}\int_{1}^{\frac{\rho+(t_{1}-t_{1}(\varepsilon))}{\delta(\varepsilon)}}\frac{a\big(\bar{t}+s,\bar{t}-t\big)}{(s+t)^2}ds\,dt.
\end{multline*}
By Lemma \ref{harmonic sum lemma}, it follows that 
\begin{align*}
\lim_{\varepsilon\to0}E_{1,2}(\varepsilon)=\lim_{\varepsilon\to0}\frac{8}{|\log\varepsilon|}\int_{1}^{\lfloor\frac{\rho-(t_{1}-t_{1}(\varepsilon))}{\delta(\varepsilon)}\rfloor}\int_{1}^{\lfloor\frac{\rho+(t_{1}-t_{1}(\varepsilon))}{\delta(\varepsilon)}\rfloor}\frac{a\left(\bar{t}+s,\bar{t}-t\right)}{(s+t)^2}ds\,dt\\=\lim_{\varepsilon\to0}\frac{8}{|\log\varepsilon|}\int_{1}^{\lfloor\frac{\rho+(t_{1}-t_{1}(\varepsilon))}{\delta(\varepsilon)}\rfloor}\int_{1}^{\lfloor\frac{\rho+(t_{1}-t_{1}(\varepsilon))}{\delta(\varepsilon)}\rfloor}\frac{a\left(\bar{t}+s,\bar{t}-t\right)}{(s+t)^2}ds\,dt\\=8\bar{a}\lim_{\varepsilon\to0}\frac{\log{\lfloor\frac{\rho+(t_{1}-t_{1}(\varepsilon))}{\delta(\varepsilon)}\rfloor}}{|\log\varepsilon|}=8\lambda\bar{a}.
\end{align*}
Thus, in this case, we have 
\[
\lim_{\epsilon\to 0}F_{\epsilon}(u_{\epsilon};(t_{1}-\rho,t_{1}+\rho))=8\left((1-\lambda)a_{\min}+\lambda\bar{a}\right).
\] 

\smallskip
\noindent \textbf{Case 2.} Suppose $\lambda_{0}\ge1$ and thus $\lambda=1$. Then, consider as a recovery sequence $(u^{1}_{\varepsilon})$ defined by 
\begin{equation*}
    u^{1}_{\epsilon}(t)=
\begin{cases}
    -1 &t\in(t_{1}-\rho,t_{1}-\varepsilon)\\
    \frac{t-t_{1}}{\varepsilon}  &t\in[t_{1}-\varepsilon,t_{1}+\varepsilon]\\
    1 &t\in(t_{1}+\varepsilon,t_{1}+\rho).
\end{cases}
\end{equation*}
By the same argument as above we can show that 
\[
F_{\epsilon}(u_{\epsilon};(t_{1}-\rho,t_{1}+\rho))
=E_{1}(\varepsilon)+o(1)\,\,\text{as}\,\,\varepsilon\to0,
\]
where 
\[
E_{1}(\varepsilon)=\frac{8}{|\log{\varepsilon}|}\int_{t_{1}-\rho}^{t_{1}-\varepsilon}\int_{t_{1}+\varepsilon}^{t_{1}+\rho}\frac{a\big(\frac{s}{\delta(\varepsilon)},\frac{t}{\delta(\varepsilon)})}{(s-t)^{2}}ds\,dt
\]

We show that 
$\lim\limits_{\varepsilon\to0}E_{1}(\varepsilon)=8\bar{a}
$.
 To this end, we proceed as follows:
\begin{multline*}
    \frac{1}{|\log{\varepsilon}|}\int_{t_{1}-\rho}^{t_{1}-\varepsilon}\int_{t_{1}+\varepsilon}^{t_{1}+\rho}\frac{a\big(\frac{s}{\delta(\varepsilon)},\frac{t}{\delta(\varepsilon)}\big)}{(s-t)^{2}}ds\,dt=\frac{1}{|\log{\varepsilon}|}\int_{\varepsilon}^{\rho}\int_{\varepsilon}^{\rho}\frac{a\big(\frac{t_{1}+s}{\delta(\varepsilon)},\frac{t_{1}-t}{\delta(\varepsilon)}\big)}{(s+t)^{2}}ds\,dt\\=\frac{1}{|\log{\varepsilon}|}\int_{\frac{\varepsilon}{\delta(\varepsilon)}}^{\frac{\rho}{\delta(\varepsilon)}}\int_{\frac{\varepsilon}{\delta(\varepsilon)}}^{\frac{\rho}{\delta(\varepsilon)}}\frac{a\big(\frac{t_{1}}{\delta(\varepsilon)}+s,\frac{t_{1}}{\delta(\varepsilon)}-t\big)}{(s+t)^{2}}ds\,dt\\=\frac{1}{|\log{\varepsilon}|}\int_{1}^{\frac{\rho}{\delta(\varepsilon)}}\int_{1}^{\frac{\rho}{\delta(\varepsilon)}}\frac{a\big(\frac{t_{1}}{\delta(\varepsilon)}+s,\frac{t_{1}}{\delta(\varepsilon)}-t\big)}{(s+t)^{2}}ds\,dt
     \\
    -\frac{1}{|\log{\varepsilon}|}\int_{1}^{\frac{\varepsilon}{\delta(\varepsilon)}}\int_{1}^{\frac{\varepsilon}{\delta(\varepsilon)}}\frac{a\big(\frac{t_{1}}{\delta(\varepsilon)}+s,\frac{t_{1}}{\delta(\varepsilon)}-t\big)}{(s+t)^{2}}ds\,dt
    \\
    -\frac{1}{|\log{\varepsilon}|}\int_{1}^{\frac{\varepsilon}{\delta(\varepsilon)}}\int_{\frac{\varepsilon}{\delta(\varepsilon)}}^{\frac{\rho}{\delta(\varepsilon)}}\frac{a\big(\frac{t_{1}}{\delta(\varepsilon)}+s,\frac{t_{1}}{\delta(\varepsilon)}-t\big)}{(s+t)^{2}}
    ds\,dt
    \\
    -\frac{1}{|\log{\varepsilon}|}\int_{\frac{\varepsilon}{\delta(\varepsilon)}}^{\frac{\rho}{\delta(\varepsilon)}}\int_{1}^{\frac{\varepsilon}{\delta(\varepsilon)}}\frac{a\big(\frac{t_{1}}{\delta(\varepsilon)}+s,\frac{t_{1}}{\delta(\varepsilon)}-t\big)}{(s+t)^{2}}ds\,dt.
\end{multline*}
One can show that each of the last two integrals converges to $0$ as $\e\to0$, since $a$ is bounded and 
\begin{align*}
    \lim_{\varepsilon\to0}\frac{1}{|\log{\varepsilon}|}\int_{1}^{\frac{\varepsilon}{\delta(\varepsilon)}}\int_{\frac{\varepsilon}{\delta(\varepsilon)}}^{\frac{\rho}{\delta(\varepsilon)}}\frac{1}{(s+t)^{2}}ds\,dt=0.
\end{align*}
 By Lemma \ref{harmonic sum lemma}, we obtain
 \begin{equation}\label{lambda}
     \lim_{\varepsilon\to0}\frac{1}{|\log{\varepsilon}|}\int_{1}^{\frac{\rho}{\delta(\varepsilon)}}\int_{1}^{\frac{\rho}{\delta(\varepsilon)}}\frac{a\big(\frac{t_{1}}{\delta(\varepsilon)}+s,\frac{t_{1}}{\delta(\varepsilon)}-t\big)}{(s+t)^{2}}ds\,dt=\lambda_{0}\bar{a}.
 \end{equation}
Moreover, if $\lambda_{0}>1$, then again by Lemma \ref{harmonic sum lemma}
\begin{equation}\label{lambda-1}
    \lim_{\varepsilon\to0}\frac{1}{|\log{\varepsilon}|}\int_{1}^{\frac{\varepsilon}{\delta(\varepsilon)}}\int_{1}^{\frac{\varepsilon}{\delta(\varepsilon)}}\frac{a\big(\frac{t_{1}}{\delta(\varepsilon)}+s,\frac{t_{1}}{\delta(\varepsilon)}-t\big)}{(s+t)^{2}}ds\,dt=(\lambda_{0}-1)\bar{a}.
\end{equation}
On the other hand, if $\lambda_{0}=1$, then a direct computation yields
\begin{multline*}
    \lim_{\e\to0}\frac{1}{|\log{\varepsilon}|}\int_{1}^{\frac{\varepsilon}{\delta(\varepsilon)}}\int_{1}^{\frac{\varepsilon}{\delta(\varepsilon)}}\frac{a\big(\frac{t_{1}}{\delta(\varepsilon)}+s,\frac{t_{1}}{\delta(\varepsilon)}-t\big)}{(s+t)^{2}}ds\,dt\\
    \le\lim_{\e\to0}\frac{\beta_{a}}{|\log{\varepsilon}|}\int_{1}^{\frac{\varepsilon}{\delta(\varepsilon)}}\int_{1}^{\frac{\varepsilon}{\delta(\varepsilon)}}\frac{1}{(s+t)^{2}}ds\,dt=0
\end{multline*}
Hence, equality \eqref{lambda-1} remains valid in the case $\lambda_{0}=1$. It follows from \eqref{lambda} and \eqref{lambda-1} that $
\lim\limits_{\varepsilon\to0}E_{1}(\varepsilon)=\lambda_{0}\bar{a}-(\lambda_{0}-1)\bar{a}=\bar{a}$.

\medskip
\noindent \textbf{Conclusion.} Let $u\in\BV((0,1);\{-1,1\})$ then we consider 
\begin{equation*}
u_{\varepsilon}(t)=
    \begin{cases}
        u(t) & \text{if}\,\,t\notin\bigcup_{n=1}^{\#S(u)}(t_{n}-\rho,t_{n}+\rho),\\
        u^{n}_{\varepsilon}(t) & \text{if}\,\,t\in(t_{n}-\rho,t_{n}+\rho).
    \end{cases}
\end{equation*}
Let $n\in\{1,\ldots,\#S(u)\}$ and let $l\in[t_n+\rho,\, t_{n+1}-\rho]$, where $t_{\#S(u)+1}=1$. In particular, if $n=\#S(u)$, then $l\in[t_{\#S(u)}+\rho,1]$. We then claim that
\begin{equation}\label{induction argument}
    \lim_{\varepsilon\to0}F_{\varepsilon}(u_{\varepsilon},(0,l))=8n((1-\lambda)a_{\min}+\lambda\bar{a}).
\end{equation}
We prove the claim by induction on $n$. Suppose $n=1$. Since $W(-1)=W(1)=0$, we obtain the following.
\begin{multline*}
    F_{\varepsilon}(u_{\varepsilon},(0,l))=F_{\varepsilon}(u_{\varepsilon},(t_{1}-\rho,t_{1}+\rho))\\+\frac{1}{\left|\log{\e}\right|}\int_{0}^{t_{1}-\rho}\int_{t_{1}-\rho}^{l}a\big(\frac{s}{\delta(\e) },\frac{t}{\delta(\e) }\big)\frac{\left|u_{\e}(s)-u_{\e}(t)\right|^2}{\left|s-t\right|^2}ds\,dt\\+\frac{1}{\left|\log{\e}\right|}\int_{t_{1}-\rho}^{t_{1}+\rho}\int_{0}^{t_{1}-\rho}a\big(\frac{s}{\delta(\e) },\frac{t}{\delta(\e) }\big)\frac{\left|u_{\e}(s)-u_{\e}(t)\right|^2}{\left|s-t\right|^2}ds\,dt\\+\frac{1}{\left|\log{\e}\right|}\int_{t_{1}-\rho}^{t_{1}+\rho}\int_{t_{1}+\rho}^{l}a\big(\frac{s}{\delta(\e) },\frac{t}{\delta(\e) }\big)\frac{\left|u_{\e}(s)-u_{\e}(t)\right|^2}{\left|s-t\right|^2}ds\,dt\\+\frac{1}{\left|\log{\e}\right|}\int_{t_{1}+\rho}^{l}\int_{0}^{t_{1}+\rho}a\big(\frac{s}{\delta(\e) },\frac{t}{\delta(\e) }\big)\frac{\left|u_{\e}(s)-u_{\e}(t)\right|^2}{\left|s-t\right|^2}ds\,dt.
\end{multline*}
Each of the last four integrals converges to zero as $\varepsilon\to0$. We show this for the first term; the other cases follow analogously.
We may assume that $\varepsilon$ is sufficiently small so that
$u_{\varepsilon}$ is constant on $(t_{1}-\rho,t_{1}-\rho/2)$ and takes the same value on $(0,t_{1}-\rho)$. Hence,
\begin{multline*}
    \limsup_{\e\to0}\frac{1}{\left|\log{\e}\right|}\int_{0}^{t_{1}-\rho}\int_{t_{1}-\rho}^{l}a\big(\frac{s}{\delta(\e) },\frac{t}{\delta(\e) }\big)\frac{\left|u_{\e}(s)-u_{\e}(t)\right|^2}{\left|s-t\right|^2}ds\,dt\\=\limsup_{\e\to0}\frac{1}{\left|\log{\e}\right|}\int_{0}^{t_{1}-\rho}\int_{t_{1}-\rho/2}^{l}a\big(\frac{s}{\delta(\e) },\frac{t}{\delta(\e) }\big)\frac{\left|u_{\e}(s)-u_{\e}(t)\right|^2}{\left|s-t\right|^2}ds\,dt\le\limsup_{\e\to0}\frac{16\beta_{a}}{\rho^{2}\left|\log{\e}\right|}.
\end{multline*}
Therefore, the first term converges to zero as $\varepsilon \to 0$. Combining Cases 1 and 2, we obtain
\[
 \lim_{\e\to0}F_{\varepsilon}(u_{\varepsilon},(0,l))=\lim_{\varepsilon\to0}F_{\varepsilon}(u^{1}_{\varepsilon},(t_{1}-\rho,t_{1}+\rho))=8((1-\lambda)a_{\min}+\lambda\bar{a}).
\]
Assume now that the claim holds for $k=n-1$. We show it for $k=n$. Indeed,
\begin{multline*}
     F_{\varepsilon}(u_{\varepsilon},(0,l))=F_{\varepsilon}(u_{\varepsilon},(0,t_{n-1}+\rho))+F_{\varepsilon}(u_{\varepsilon},(t_{n-1}+\rho,l))\\+\frac{1}{\left|\log{\e}\right|}\int_{0}^{t_{n-1}+\rho}\int_{t_{n-1}+\rho}^{l}a\big(\frac{s}{\delta(\e) },\frac{t}{\delta(\e) }\big)\frac{\left|u_{\e}(s)-u_{\e}(t)\right|^2}{\left|s-t\right|^2}ds\,dt\\+\frac{1}{\left|\log{\e}\right|}\int_{t_{n-1}+\rho}^{l}\int_{0}^{t_{n-1}+\rho}a\big(\frac{s}{\delta(\e) },\frac{t}{\delta(\e) }\big)\frac{\left|u_{\e}(s)-u_{\e}(t)\right|^2}{\left|s-t\right|^2}ds\,dt.
\end{multline*}
By the same argument as in the case $n=1$, one can show that the last two integrals vanish as $\varepsilon\to0$. Moreover, repeating the argument used in the case $n=1$, we get
\[
\lim_{\varepsilon\to0}F_{\varepsilon}(u_{\varepsilon},(t_{n-1}+\rho,l))=8((1-\lambda)a_{\min}+\lambda\bar{a}).
\]
Hence, we conclude that
\[
\lim_{\varepsilon\to0}F_{\varepsilon}(u_{\varepsilon},(0,l))=\lim_{\varepsilon\to0}F_{\varepsilon}(u_{\varepsilon},(0,t_{n-1}+\rho))+\lim_{\varepsilon\to0}F_{\varepsilon}(u_{\varepsilon},(t_{n-1}+\rho,l))=8n((1-\lambda)a_{\min}+\lambda\bar{a}).
\]
Then, if we choose $l=t_{\#S(u)+1}=1$
\[
\lim_{\varepsilon\to0}F_{\varepsilon}(u_{\varepsilon})=\lim_{\varepsilon\to0}F_{\varepsilon}(u_{\varepsilon},(0,1))=8\#S(u)((1-\lambda)a_{\min}+\lambda\bar{a})=F(u).
\]
This completes the proof of the limsup inequality.

\bigskip

\textbf{Acknowledgments.} This article is based on work supported by the Excellence Project MatMod@TOV (Grant Agreement No. CUP E83C23000330006) of the Department of Mathematics of the University of Rome Tor Vergata. AB is a member of GNAMPA, INdAM.

 \bibliographystyle{abbrv}
\bibliography{Bibliography}

\end{document}